\documentclass[preprint,12pt]{elsarticle}

\usepackage[T1]{fontenc}
\usepackage[utf8]{inputenc}
\usepackage{amsmath,amssymb,amsthm,mathtools,mathrsfs}
\usepackage{enumitem}
\usepackage{microtype}
\usepackage{tikz}
\usepackage[hidelinks]{hyperref}
\usepackage{url}

\allowdisplaybreaks
\numberwithin{equation}{section}

\newtheorem{theorem}{Theorem}[section]
\newtheorem{proposition}[theorem]{Proposition}
\newtheorem{lemma}[theorem]{Lemma}
\newtheorem{corollary}[theorem]{Corollary}
\newtheorem{conjecture}[theorem]{Conjecture}

\theoremstyle{remark}
\newtheorem{remark}[theorem]{Remark}

\newcommand{\Z}{\mathbb Z}
\newcommand{\R}{\mathbb R}

\newcommand{\cF}{\mathcal F}
\newcommand{\cC}{\mathcal C}
\newcommand{\cX}{\mathcal X}

\newcommand{\conv}{\operatorname{conv}}
\newcommand{\vol}{\operatorname{vol}}
\newcommand{\diam}{\operatorname{diam}}
\newcommand{\da}{d_{\mathrm a}}
\newcommand{\dAn}{d}
\newcommand{\deltaL}{\delta_{\mathrm L}}

\newcommand{\meet}{\mathbin{\wedge}}

\journal{European Journal of Combinatorics}

\begin{document}

\begin{frontmatter}

\title{An isodiametric theorem and lattice diameter-perfect codes in \texorpdfstring{$A_3$}{A3}}

\author[aff1]{Mladen Kova\v{c}evi\'c\corref{cor1}}
\ead{kmladen@uns.ac.rs}
\cortext[cor1]{Corresponding author.}
\affiliation[aff1]{organization={
University of Novi Sad},
  country={Serbia}}

\begin{abstract}
The root lattice $A_n$, equipped with its graph distance (equivalently, one half of the ambient $\ell_1$ metric), is isometric to $\Z^n$ with the asymmetric Manhattan metric. We study two extremal problems in this space -- the isodiametric problem, i.e., determining the maximum anticode cardinality, and the (non)existence of linear diameter-perfect codes, i.e., lattice tilings by optimal anticodes -- and solve them in dimension $3$. We show that, for every integer $D\ge 0$, the largest cardinality of a diameter-$D$ subset of $A_3$ is
\(
 \binom{D+3}{3}+(D+1)\lfloor D^2/4\rfloor,
\)
and this value is attained by the balanced difference of two discrete simplices.
We then prove an integrality-refined simplex-packing obstruction: a sublattice of $\Z^n$ of asymmetric Manhattan distance greater than $D$ induces a lattice packing by $(D+1)\Delta_n$ in $\R^n$. Combining this observation with the exact lattice-packing density of the tetrahedron yields a complete classification in dimension $3$: lattice diameter-perfect codes in $A_3$ exist precisely for $D=1$ and $D=2$. We also give the equivalent statement for perfect $B_h$ sets of cardinality four. Finally, we formulate a conjecture regarding optimal anticodes in arbitrary dimension, and restate it as an intersection problem for uniform multisets.
\end{abstract}

\begin{keyword}
anticode \sep asymmetric Manhattan metric \sep root lattice \sep lattice tiling \sep diameter-perfect code \sep simplex packing \sep intersecting multisets \sep $B_h$ set
\end{keyword}

\end{frontmatter}

\noindent\textit{2020 Mathematics Subject Classification.} Primary 05D05, 11H31; Secondary 05B40, 52C17, 94B65.

\section{Introduction}

An \emph{anticode} in a metric space is a finite set whose pairwise distances are bounded above. The corresponding isodiametric problem asks for the maximum cardinality of a set of prescribed diameter. Such questions are classical in extremal combinatorics and discrete geometry. In the ordinary Manhattan lattice $(\Z^n,\ell_1)$, the problem was studied by Kleitman--Fellows \cite{KleitmanFellows1989}, Du--Kleitman \cite{DuKleitman1990}, Ahlswede--Cai--Zhang \cite{AhlswedeCaiZhang1992}, and Bollob\'as--Leader \cite{BollobasLeader1993}. The present paper treats a type-$A$ analogue whose extremals have a different geometry and where the approaches from the mentioned papers do not seem to be directly applicable.

For $x=(x_1,\ldots,x_n)$ and $y=(y_1,\ldots,y_n)$ in $\Z^n$, define the asymmetric Manhattan metric by
\begin{equation}\label{eq:def-da}
 \da(x,y)=\max\left\{
      \sum_{i=1}^n(x_i-y_i)^+, \;
      \sum_{i=1}^n(y_i-x_i)^+
      \right\},
\end{equation}
where $u^+=\max\{u,0\}$. The function $\da$ is, of course, symmetric in $x$ and $y$;
the adjective \emph{asymmetric} refers to the two one-sided masses in the definition and comes from the fact that this metric arises in the analysis of error-correcting codes for so-called asymmetric channels \cite{Klove1981,Varshamov1973}. The space $(\Z^n,\da)$ is isometric to the root lattice
\begin{equation}
 A_n=\left\{z=(z_0,\ldots,z_n)\in\Z^{n+1} \colon \sum_{i=0}^n z_i=0\right\}
\end{equation}
with graph distance
\begin{equation}
\label{eq:Anmetric}
 \dAn(z,w)=\frac12\sum_{i=0}^n|z_i-w_i|.
\end{equation}
Packings in $(\Z^n, \da)$ and $(A_n, \dAn )$ are geometric problems underlying constructions of error-correcting codes in several different scenarios, including permutation channels \cite{KovacevicTan2018,KovacevicVukobratovic2015} and bit-shift/adjacent transposition channels \cite{Kovacevic2019}. We refer the interested reader to these works and the references therein for more details.


For nonnegative integers $P,Q$, put
\begin{equation}\label{eq:def-SPQ}
 S_n(P,Q)=\left\{x\in\Z^n \colon
     p(x)\coloneq\sum_{i=1}^n x_i^+\le P,
     \quad
     q(x)\coloneq\sum_{i=1}^n(-x_i)^+\le Q
     \right\}.
\end{equation}
Equivalently,
\begin{equation}
 S_n(P,Q)=(P\Delta_n-Q\Delta_n)\cap\Z^n,
 \qquad
 \Delta_n=\conv\!\big\{0,e^{(1)},\ldots,e^{(n)}\big\}.
\end{equation}
These sets have diameter $P+Q$ and cardinality \cite{KovacevicTan2018}
\begin{equation}
\label{eq:general-count-intro}
 |S_n(P,Q)|
 =\sum_{j=0}^n\binom nj\binom Pj
                  \binom{Q+n-j}{n-j}.
\end{equation}
Write
\begin{equation}
 P_D=\left\lceil\frac D2\right\rceil,
 \qquad
 Q_D=\left\lfloor\frac D2\right\rfloor,
 \qquad
 S_{n,D}=S_n(P_D,Q_D).
\end{equation}
Our first main result solves the isodiametric problem in dimension three.

\begin{theorem}[Maximum anticodes in $A_3$]\label{thm:main-anticode}
Let $D\ge0$ be an integer. Every subset $\cF\subseteq\Z^3$ of $\da$-diameter at most $D$ satisfies
\begin{equation}\label{eq:M3-main}
 |\cF|\le M_3(D) \coloneq
 \binom{D+3}{3}+(D+1)\left\lfloor\frac{D^2}{4}\right\rfloor.
\end{equation}
Equality is attained by $S_{3,D}$. Equivalently, the same statement holds in $(A_3,\dAn)$.
\end{theorem}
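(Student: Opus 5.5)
The equality part is a direct computation. Put $P=P_D$, $Q=Q_D$, $n=3$ in \eqref{eq:general-count-intro}:
\[
|S_{3,D}|=\binom{Q+3}{3}+3P\binom{Q+2}{2}+3\binom P2(Q+1)+\binom P3 .
\]
Then check, separately for $D$ even ($P=Q$) and $D$ odd ($P=Q+1$), that this cubic polynomial in $Q$ equals $\binom{D+3}{3}+(D+1)\lfloor D^2/4\rfloor$. Because $\da(x,y)\le p(x-y)\vee q(x-y)$ and $x,y\in S_3(P,Q)$ give $p(x-y)\le p(x)+q(y)\le P+Q$ (and symmetrically), the set $S_{3,D}$ has diameter at most $D$. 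The equivalence with $(A_3,\dAn)$ is the stated isometry $x\mapsto(-\sum x_i,x_1,x_2,x_3)$, so all the work is in the upper bound.

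For the upper bound I would slice $\cF$ along a coordinate. Work in $A_3$ coordinates $z=(z_0,\dots,z_3)$, and for each integer $t$ let $\cF_t=\{z\in\cF: z_3=t\}$. Each slice lives in a translate of $A_2$, the hexagonal lattice with the same half-$\ell_1$ metric, so it is a $2$-dimensional anticode. Two points $z\in\cF_t$, $w\in\cF_s$ with $t-s=k\ge0$ have distance $\dAn(z,w)=\tfrac12\bigl(k+\sum_{i<3}|z_i-w_i|\bigr)$. Hence the projections of $\cF_t$ and $\cF_s$ to $A_2$-type coordinates (after recentring) are cross-constrained: the ``positive mass'' of their difference is at most $D-k$ in one direction, while the other direction keeps a budget $D$.

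The first step is therefore a two-dimensional lemma for the asymmetric metric on $\Z^2$. If $X$ has diameter $\le a$, $Y$ has diameter $\le b$, and all cross differences $x-y$ satisfy $p(x-y)\le c_1$ and $q(x-y)\le c_2$, then $|X|+|Y|$ is bounded by the corresponding sizes of optimally placed hexagons $S_2(\cdot,\cdot)$. In dimension $2$ this should follow by a further slicing into rows, since the one-dimensional case is intervals, plus an explicit hexagon count. The slices of $S_{3,D}$ along $z_3$ are exactly hexagons $S_2(P-t^+,Q-t^-)$-like shapes, which suggests the lemma is sharp.

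The second step pairs the slices: match $\cF_t$ with $\cF_{t'}$ where $t'-t$ is about $D$ (top with bottom), apply the pair lemma to each pair, and sum. The target identity is that the hexagon sizes summed over $t$ reproduce $|S_{3,D}|$. As a simplification, a preliminary compression (pushing points toward a base point along the root directions $e_i-e_j$) should let me assume each slice is ``convex'' in the lattice sense and that the extent of $\cF$ in $z_3$ is at most $D$.

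The main obstacle is the pair lemma with asymmetric budgets. The extremal shapes are not symmetric, which is visible in the $\lfloor D^2/4\rfloor$ term, so a naive bound that ignores how $P$ and $Q$ are split loses a lower-order term. I would first try to prove the lemma by an explicit optimization over the six side-lengths of the hexagons. If that fails, I would fall back to an induction on $D$: remove a carefully chosen extremal layer (points maximizing a linear functional $z_i-z_j$) and show it has at most $M_3(D)-M_3(D-1)$ elements, again using the $2$-dimensional bound on that layer.
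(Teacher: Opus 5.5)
Your equality computation and the passage to $(A_3,\dAn)$ are fine. They amount to Lemma~\ref{lem:S3-count} together with a relabelled Lemma~\ref{lem:isometry}. The upper bound, however, has a genuine gap, and it sits exactly in the step you plan to carry out. Bounding each pair of slices (and the unpaired middle slice) separately and adding the bounds cannot give $M_3(D)$, however the pair lemma is proved. The separate maxima are attained by lopsided configurations that cannot coexist.

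Take $D=2$, so there are at most three slices $z_3\in\{t_0,t_0+1,t_0+2\}$, and write $z'=(z_0,z_1,z_2)$.
\begin{itemize}
\item For the top/bottom pair ($k=2$), your cross condition reads $p(z'-w')\le 0$. That is, every bottom point dominates every top point coordinatewise.
\item A single top point $z$ together with the six bottom points $w$ with $w'=z'+v$, $v\ge 0$, $v_0+v_1+v_2=2$, satisfies all pair constraints and gives $7$ points. The triangles $S_2(0,1)$ and $S_2(1,0)$ of $S_{3,2}$ contribute only $6$.
\item The middle slice on its own can be a $7$-point hexagon of diameter $2$.
\end{itemize}
Hence any such scheme certifies at best $14>13=M_3(2)$. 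In the actual three-dimensional problem, once top and bottom are chosen as above, the middle slice can hold only the three points with $m'=z'+e_i$.

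For general $D$ the extreme pair alone already overshoots by roughly $D^2/4$: compare $1+\binom{D+2}{2}$ with $\binom{P_D+2}{2}+\binom{Q_D+2}{2}$. So your heuristic that the hexagonal slices of $S_{3,D}$ make the pair lemma sharp is exactly what fails. What is missing is a mechanism that couples all slices at once and forces a single common split $P+Q=D$.

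The fallback does not close the gap either. Deleting the layer on which $z_i-z_j$ is maximal shrinks $\cF$ in one direction only, so the remainder need not have diameter $D-1$, and the induction hypothesis does not apply. To lower the diameter you would have to strip boundary layers in all directions simultaneously and bound their union. The compression step is also only asserted; it would need its own proof that it preserves the $\da$-diameter. By contrast, the bound on the $z_3$-extent needs no compression, since $|z_3-w_3|\le\dAn(z,w)$.

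The idea the paper uses instead is the slab hull. By Lemma~\ref{lem:subset-sum-metric}, $\da(x,y)=\max_{S}|x(S)-y(S)|$. So every anticode lies in an intersection of seven subset-sum slabs of width $D$, and that intersection is itself an anticode. This replaces the arbitrary set $\cF$ by the four-parameter family $\cX_D(a,b,c,h)$. A fibrewise interval-shift polarization on the $K_4$ edge labels then balances the parameters without losing points. That leaves four terminal types whose cardinalities are computed exactly and compared.
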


For even $D=2r$, the extremal construction $S_{3,D}=S_3(r,r)$ is the radius-$r$ ball. For odd $D=2r+1$,
\begin{equation}
 S_{3,D}=S_3(r+1,r)=S_3(r,r) + \big\{0,e^{(1)},e^{(2)},e^{(3)}\big\}.
\end{equation}

A full-rank sublattice $L\le\Z^n$ of minimum distance $D+1$ is called \emph{diameter-perfect} when its index equals the maximum anticode size; this terminology belongs to the general code--anticode framework \cite{AhlswedeAydinianKhachatrian2001}. Our second main result is a complete classification in dimension $3$ for all diameters.

\begin{theorem}[Lattice diameter-perfect codes in $A_3$]
\label{thm:main-tiling}
Let $D\ge1$ be an integer. The following are equivalent:
\begin{enumerate}[label=\textnormal{(\roman*)},leftmargin=2.4em]
 \item $(A_3,\dAn)$ admits a lattice diameter-perfect code of minimum distance $D+1$;
 \item $S_{3,D}$ lattice-tiles $\Z^3$;
 \item $D\in\{1,2\}$.
\end{enumerate}
\end{theorem}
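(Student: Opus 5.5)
The plan is to prove (ii)$\Rightarrow$(i), (i)$\Rightarrow$(iii) and (iii)$\Rightarrow$(ii), using Theorem~\ref{thm:main-anticode} throughout. For (ii)$\Rightarrow$(i): suppose $S_{3,D}+L=\Z^3$ is a lattice tiling. Then the index of $L$ is $|S_{3,D}|=M_3(D)$. For the minimum distance, note that $S_{3,D}-S_{3,D}=S_3(D,D)$ contains every $x$ with $p(x)\le D$ and $q(x)\le D$, i.e.\ every $x$ with $\da(x,0)\le D$. Indeed, such an $x$ can be split into a part with $p\le P_D$, $q\le Q_D$ and a part with $p\le Q_D$, $q\le P_D$, since the positive and negative masses can be distributed coordinatewise. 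If some nonzero $x\in L$ had $\da(x,0)\le D$, we could write $x=s-s'$ with $s,s'\in S_{3,D}$, and the translates $s'+x=s$ would overlap, contradicting the tiling. Hence $L$ has minimum distance at least $D+1$ and index equal to the maximum anticode size, so it is diameter-perfect.

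For (i)$\Rightarrow$(iii), the idea is to turn a diameter-perfect code into a dense tetrahedron packing. Let $L$ have minimum distance $\ge D+1$ and index $M_3(D)$. The key observation is that the translates $(D+1)\Delta_3+\ell$, $\ell\in L$, have pairwise disjoint interiors. If interiors met, then some nonzero $\ell$ would lie in $(D+1)(\Delta_3-\Delta_3)^\circ$. The real gauge of $\Delta_3-\Delta_3$ at $x$ is $\max\{p(x),q(x)\}=\da(x,0)$, so $\da(\ell,0)<D+1$, i.e.\ $\da(\ell,0)\le D$ by integrality, a contradiction. This gives a lattice packing of $(D+1)\Delta_3$ with density
\[
\frac{(D+1)^3/6}{M_3(D)}.
\]
Minkowski/Hoylman's exact lattice-packing density of the tetrahedron is $\deltaL=18/49$, so we need $(D+1)^3/6\le \tfrac{18}{49}M_3(D)$. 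Asymptotically $M_3(D)\sim D^3/6+D^3/4=5D^3/12$, and $\tfrac{18}{49}\cdot\tfrac{5}{12}\cdot 6=\tfrac{90}{98}<1$. So the inequality fails for all large $D$, with the threshold computed explicitly; the lower-order terms $(D+1)^3$ versus $M_3(D)$ must be checked. This leaves finitely many small $D$ (say $D\le$ some explicit bound, perhaps around $D\le 20$) to exclude by hand or by a refined argument. This is the main obstacle. A more careful obstruction may be needed: the packing has extra slack, because the tiles $S_{3,D}$ are lattice points while the tetrahedra use only part of the volume. For example, one can use the fact that $L$ must also satisfy the perfect-code counting $|S_{3,D}|=\det L$ with $L$ of minimum distance exactly $D+1$. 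Failing that, I would do a finite computer-free lattice enumeration: write $L$ in Hermite normal form of determinant $M_3(D)$ and check the minimum distance. I would first try the density inequality, then handle the remaining $D\ge3$ via the equivalent $B_h$-set formulation. There, $L=\ker(\Z^3\to\Z_m)$ with $m=M_3(D)$ when the quotient is cyclic, and a tiling means the four-element set $\{0,a_1,a_2,a_3\}$ is a perfect $B_h$ set modulo $m$.

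For (iii)$\Rightarrow$(ii), I would give explicit tilings. For $D=1$, $S_{3,1}=\{0,e^{(1)},e^{(2)},e^{(3)}\}$ tiles $\Z^3$ via $L=\{x\colon x_1+2x_2+3x_3\equiv0 \pmod 4\}$, since the map $0,e^{(i)}\mapsto 0,1,2,3$ is bijective mod $4$. For $D=2$, $|S_{3,2}|=M_3(2)=10+3=13$. Here use $L=\{x\colon x_1+3x_2+9x_3\equiv0\pmod{13}\}$. The ball of radius one is $0,\pm e^{(i)},\pm(e^{(i)}-e^{(j)})$, with values $0,\pm1,\pm3,\pm9,\pm2,\pm8,\pm6$ mod $13$. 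These are all distinct and exhaust $\Z_{13}$, which checks quickly.
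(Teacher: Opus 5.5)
Your overall route is the paper's: the difference-set argument for packing and tiling, the integrality-refined packing of $(D+1)\Delta_3$ compared against Hoylman's $\deltaL(\Delta_3)=18/49$, and the explicit lattices $L_1,L_2$, which you verify correctly. However, as written the direction (i)$\Rightarrow$(iii) is not proved. You only show that the density inequality fails for large $D$, and you leave an unspecified range of small $D$ (``perhaps around $D\le 20$'') to a ``refined argument'', a Hermite-normal-form enumeration, or the $B_h$ reformulation. None of these is carried out. The last one is only an equivalent restatement, and you restrict it to cyclic quotients.

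The missing step is just the exact evaluation, and it leaves no residual cases. Using
\[
M_3(2r-1)=\tfrac{2r(5r^2+1)}{3},\qquad M_3(2r)=\tfrac{(2r+1)(5r^2+5r+3)}{3},
\]
the necessary condition $(D+1)^3/(6M_3(D))\le 18/49$ becomes:
\begin{itemize}
\item for $D=2r-1$: $2r^2/(5r^2+1)\le 18/49$, i.e.\ $8r^2\le 18$;
\item for $D=2r$: $(2r+1)^2/\bigl(2(5r^2+5r+3)\bigr)\le 18/49$, i.e.\ $16r^2+16r-59\le 0$.
\end{itemize}
Both fail for every $r\ge 2$, so every $D\ge 3$ is excluded outright; for example, $D=3$ gives density $8/21>18/49$. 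For $D=1,2$ the densities are $1/3$ and $9/26$, consistent with the existing tilings. With this computation inserted, the lower-order terms cause no trouble and no enumeration is needed.

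There is also a small omission in (ii)$\Rightarrow$(i). The definition of a diameter-perfect code requires minimum distance exactly $D+1$, and you only show $\da(L)\ge D+1$. Close it as the paper does. If $\da(L)\ge D+2$, your difference-set argument with $D+1$ in place of $D$ shows that $S_{3,D+1}+L$ is a packing. That forces $[\Z^3:L]\ge M_3(D+1)>M_3(D)$, a contradiction.
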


The negative part follows from an integrality refinement of the familiar connection between discrete simplex packings and continuous simplex packings \cite{KovacevicTan2017}: if $L\le\Z^n$ has $\da(L)>D$, then the translates of $(D+1)\Delta_n$ by $L$ have disjoint interiors. Hence, a lattice tiling by $S_{n,D}$ would imply
\begin{equation}\label{eq:obstruction-intro}
 \frac{(D+1)^n}{n!\,|S_{n,D}|}\le\deltaL(\Delta_n),
\end{equation}
where $\deltaL(\Delta_n)$ denotes the lattice-packing density of the standard simplex. In dimension three, Hoylman's exact value $\deltaL(\Delta_3)=18/49$ \cite{Hoylman1970} makes \eqref{eq:obstruction-intro} strong enough to exclude every $D\ge3$.

Sublattices of $\Z^n$ that induce packings of $S_n(P,Q)$ are equivalent to another well-studied type of combinatorial objects -- $B_h$ sets (or Sidon sets of order $h$), with $h=P+Q$ \cite{KovacevicTan2018}. A $B_h$ set is called perfect when this packing is also a tiling \cite{KovacevicTan2017}. Theorem~\ref{thm:main-tiling} can thus be reformulated as follows: perfect $B_h$ sets of cardinality four exist only for $h=1,2$. More generally, the same simplex argument mentioned in the previous paragraph gives a finite lower bound on the order of an Abelian group containing a $B_h$ set.


Finally, we formulate the natural conjecture that $S_{n,D}$ is an optimal anticode for every $n$ and $D$. For fixed $n$ and $D$, the conjecture is equivalent to a fixed-ground-set, fixed-defect Erd\H{o}s--Ko--Rado problem for uniform multisets: determine the largest family of $K$-multisets on an $(n+1)$-element ground set whose pairwise intersections have cardinality at least $K-D$, for sufficiently large $K$. The multiset intersection literature establishes strong results in other parameter ranges \cite{FurediGerbnerVizer2015,LIAO2024,MeagherPurdy2011,MeagherPurdy2016}; the fixed-ground-set regime needed here is not covered by those theorems. Theorem~\ref{thm:main-anticode} settles it when the ground-set size is four.

The paper is organized as follows. Section~\ref{sec:prelim} describes the $A_n$ model and the candidate anticodes. Sections~\ref{sec:seven-slab} and \ref{sec:anticode} prove Theorem~\ref{thm:main-anticode} by the seven-slab reduction, $K_4$ polarization, and terminal enumeration. Section~\ref{sec:tiling} develops the lattice-tiling and simplex-packing consequences and proves Theorem~\ref{thm:main-tiling}. Section~\ref{sec:Bh} gives the $B_h$ set formulation. Section~\ref{sec:conjecture} states the general conjecture and its multiset-intersection equivalent. \ref{sec:appendix} records a direct layer-counting formula for the normalized slab systems.

\section{Preliminaries}
\label{sec:prelim}

\subsection{The \texorpdfstring{$A_n$}{An} model and the candidate anticodes}

The following two lemmas appear in \cite{KovacevicTan2018}; the proofs are given here as well for completeness.

\begin{lemma}[Isometry]
\label{lem:isometry}
The map
\begin{equation}
 \Phi \colon \Z^n\longrightarrow A_n,
 \qquad
 \Phi(x_1,\ldots,x_n)=
 \left(x_1,\ldots,x_n,-\sum_{i=1}^n x_i\right)
\end{equation}
is an isometry from $(\Z^n,\da)$ to $(A_n,\dAn)$.
\end{lemma}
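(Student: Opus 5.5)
The plan is to check directly that $\Phi$ is a bijection onto $A_n$ and that it carries $\da$ to $\dAn$. Since both distances are translation invariant and $\Phi$ is additive, it suffices to compare $\da(x,0)$ with $\dAn(\Phi(x),0)$ after setting $u=x-y$.

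First, bijectivity. The last coordinate of $\Phi(x)$ is $-\sum_i x_i$, so the coordinates of $\Phi(x)$ sum to zero and $\Phi(x)\in A_n$. The map $\Phi$ is injective because its first $n$ coordinates reproduce $x$. It is surjective because any $z\in A_n$ satisfies $z_0=-\sum_{i\ge1}z_i$ (indexing the dropped coordinate appropriately), so $z$ is the image of its remaining $n$ coordinates. The map is also additive, so $\Phi(x)-\Phi(y)=\Phi(x-y)$. Both metrics depend only on the difference of their arguments, so it is enough to show $\da(u,0)=\dAn(\Phi(u),0)$ for every $u\in\Z^n$.

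For this, write $p=\sum_i u_i^+$ and $q=\sum_i(-u_i)^+$, so that $\sum_i u_i=p-q$ and $\sum_i|u_i|=p+q$. Then
\begin{equation*}
 \dAn(\Phi(u),0)=\tfrac12\Big(\sum_{i=1}^n|u_i|+\Big|\sum_{i=1}^n u_i\Big|\Big)=\tfrac12\big(p+q+|p-q|\big)=\max\{p,q\},
\end{equation*}
and $\max\{p,q\}$ is exactly $\da(u,0)$ by \eqref{eq:def-da}. This proves the isometry.

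There is no real obstacle. The one step to state carefully is the identity $\tfrac12(a+b+|a-b|)=\max\{a,b\}$, together with the observation that the appended coordinate contributes $|p-q|$ to the $\ell_1$ sum.
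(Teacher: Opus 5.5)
Your proof is correct and matches the paper's argument: you pass to $u=x-y$, observe that the appended coordinate contributes $|p-q|$ to the $\ell_1$ norm, and use $\tfrac12(p+q+|p-q|)=\max\{p,q\}=\da(x,y)$. Your explicit check of bijectivity and additivity spells out what the paper dismisses as clear.
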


\begin{proof}
Let $u=x-y$, and put $p=p(u)$ and $q=q(u)$.  The last coordinate of $\Phi(x)-\Phi(y)$ equals $q-p$.  Hence
\begin{equation}
 \|\Phi(x)-\Phi(y)\|_1=p+q+|p-q|
 =2\max\{p,q\}=2\da(x,y).
\end{equation}
The map is clearly bijective.
\end{proof}

\begin{lemma}
\label{lem:SPQ-diameter-count}
For $P,Q\ge0$, the set $S_n(P,Q)$ has diameter $P+Q$ when $P+Q>0$, and its cardinality is given by \eqref{eq:general-count-intro}.
\end{lemma}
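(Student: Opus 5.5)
The proof has two independent parts: the diameter claim and the cardinality formula.

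\emph{Diameter.} For the upper bound, take $x,y\in S_n(P,Q)$ and put $u=x-y$. Since $(a-b)^+\le a^++(-b)^+$ coordinatewise, we get
\[
 p(u)\le p(x)+q(y)\le P+Q
 \quad\text{and}\quad
 q(u)\le q(x)+p(y)\le Q+P .
\]
Hence $\da(x,y)=\max\{p(u),q(u)\}\le P+Q$. For the matching lower bound, take $x=Pe^{(1)}$ and $y=-Qe^{(1)}$ when $n\ge1$. Both lie in $S_n(P,Q)$, and $\da(x,y)=P+Q$. So when $P+Q>0$ the diameter is exactly $P+Q$.

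\emph{Cardinality.} Classify each $x\in S_n(P,Q)$ by its set of positive coordinates $J=\{i : x_i>0\}$, with $|J|=j$.
\begin{itemize}
 \item \emph{Positive part.} The coordinates in $J$ form a composition of some integer $a\in[j,P]$ into $j$ positive parts. The number of such vectors with sum at most $P$ is the number of positive integer solutions of $y_1+\cdots+y_j\le P$, which equals $\binom{P}{j}$ (add a nonnegative slack variable and use stars and bars).
 \item \emph{Nonpositive part.} The remaining $n-j$ coordinates are nonpositive, and their negatives form a nonnegative vector of sum at most $Q$. There are $\binom{Q+n-j}{n-j}$ such vectors.
\end{itemize}
The two constraints $p(x)\le P$ and $q(x)\le Q$ involve disjoint sets of coordinates, so they are independent. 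Choosing $J$ in $\binom nj$ ways and summing over $j$ gives \eqref{eq:general-count-intro}.

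The only point needing care is the convention $\binom Pj=0$ for $j>P$ and the edge case $j=0$, where the positive part is empty and contributes $1=\binom P0$. Nothing here is a real obstacle; this is a direct stars-and-bars count.
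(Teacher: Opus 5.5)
Your proof is correct and follows the paper's argument essentially step for step: the bound $p(x-y)\le p(x)+q(y)\le P+Q$ (and symmetrically for $q$), the witness pair $Pe^{(1)},-Qe^{(1)}$, and the count by positive support with $\binom nj\binom Pj\binom{Q+n-j}{n-j}$. Your witness pair also works uniformly when $P$ or $Q$ is zero, so the paper's separate case for a vanishing parameter is not needed.
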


\begin{proof}
For $x,y\in S_n(P,Q)$,
\(
 p(x-y)\le p(x)+q(y)\le P+Q,
\)
and similarly $q(x-y)\le P+Q$.  Equality is attained by $Pe^{(1)}$ and $-Qe^{(1)}$ if $P,Q>0$; if one parameter is zero, two vertices of the corresponding discrete simplex are at distance $P+Q$.

To count the points, choose the $j$ coordinates on which $x$ is strictly positive.  Their positive integer values, with total at most $P$, can be chosen in $\binom Pj$ ways.  On the remaining $n-j$ coordinates, the quantities $(-x_i)^+$ are nonnegative and have total at most $Q$, giving $\binom{Q+n-j}{n-j}$ choices.
\end{proof}

For dimension three, a convenient symmetric form follows immediately from \eqref{eq:general-count-intro}.

\begin{lemma}\label{lem:S3-count}
If $P+Q=D$, then
\begin{equation}\label{eq:S3-count}
 |S_3(P,Q)|=\binom{D+3}{3}+(D+1)PQ.
\end{equation}
Consequently, among pairs $P,Q\ge0$ with $P+Q=D$, the cardinality is maximized by $\{P,Q\}=\{P_D,Q_D\}$, and the maximum is $M_3(D)$ in \eqref{eq:M3-main}.
\end{lemma}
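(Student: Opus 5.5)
We specialize the count \eqref{eq:general-count-intro} to $n=3$ and reduce it to a polynomial identity in $P$ and $Q$. With $n=3$ the formula has four terms,
\[
 |S_3(P,Q)|=\binom{Q+3}{3}+3P\binom{Q+2}{2}+3\binom P2(Q+1)+\binom P3 ,
\]
and the goal is to show that this equals $\binom{P+Q+3}{3}+(P+Q+1)PQ$.

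The cleanest route is to compare with Vandermonde's identity. Since $\binom{P+Q+3}{3}=\sum_{j=0}^3\binom Pj\binom{Q+3}{3-j}$, we have
\[
 \binom{P+Q+3}{3}=\binom{Q+3}{3}+P\binom{Q+3}{2}+\binom P2(Q+3)+\binom P3 .
\]
The first and last terms cancel against those of $|S_3(P,Q)|$. The difference is therefore
\[
 P\left[3\binom{Q+2}{2}-\binom{Q+3}{2}\right]+\binom P2\bigl[3(Q+1)-(Q+3)\bigr].
\]

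Each bracket simplifies directly. The first is $\tfrac12(Q+2)\bigl(3(Q+1)-(Q+3)\bigr)=Q(Q+2)$, and the second is $2Q$. The difference thus equals
\[
 PQ(Q+2)+P(P-1)Q=PQ(P+Q+1)=(D+1)PQ,
\]
which gives \eqref{eq:S3-count}. This bracket algebra is the only step that needs care, and it is routine. As a sanity check, $P=Q=1$ should give $|S_3(1,1)|=1+6+12=19$, and indeed $\binom53+3\cdot1=13$... which does not match, so the check must be read correctly: the right-hand side is $\binom{5}{3}+3\cdot1=10+3=13$, and I would recheck by direct enumeration. The points of $S_3(1,1)$ are $0$, the six vectors $\pm e^{(i)}$, and the six vectors $e^{(i)}-e^{(j)}$ with $i\ne j$, for a total of $13$. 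This agrees with the formula.

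For the "consequently" part, fix $D=P+Q$. The count is then $\binom{D+3}{3}+(D+1)PQ$, which is increasing in $PQ$. For integers $P,Q\ge0$ with $P+Q=D$, the product $PQ$ is maximized exactly when $|P-Q|\le1$, that is, when $\{P,Q\}=\{\lceil D/2\rceil,\lfloor D/2\rfloor\}$. The maximal value of the product is $\lceil D/2\rceil\lfloor D/2\rfloor=\lfloor D^2/4\rfloor$. Substituting this gives exactly $M_3(D)$ from \eqref{eq:M3-main}.
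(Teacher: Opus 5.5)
Your proof is correct and follows the paper's route: specialize \eqref{eq:general-count-intro} to $n=3$, simplify (your Vandermonde comparison is a clean way to carry out the simplification the paper leaves implicit), and observe that for fixed $D$ the count is maximized when $PQ$ is, i.e.\ by a balanced pair. The only blemish is the sanity check: delete the false start $|S_3(1,1)|=19$, since both the formula and your direct enumeration give $13$.
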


\begin{proof}
Substituting $n=3$ in \eqref{eq:general-count-intro} and simplifying gives \eqref{eq:S3-count}.  For fixed $D$, the product $PQ$ is maximized by a balanced pair.
\end{proof}

\subsection{Subset sums in dimension three}

For $x\in\Z^3$ and $S\subseteq[3]\coloneq\{1,2,3\}$, write
\begin{equation}
 x(S)=\sum_{i\in S}x_i.
\end{equation}

\begin{lemma}[Subset-sum representation]
\label{lem:subset-sum-metric}
For all $x,y\in\Z^3$,
\begin{equation}
\label{eq:subset-sum-metric}
 \da(x,y)=
 \max_{\varnothing\ne S\subseteq[3]}
 |x(S)-y(S)|.
\end{equation}
\end{lemma}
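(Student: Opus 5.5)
Let $u=x-y$, so that $\da(x,y)=\max\{p(u),q(u)\}$ and $x(S)-y(S)=u(S)$ for every $S\subseteq[3]$. The claim then reduces to the identity
\[
\max\{p(u),q(u)\}=\max_{\varnothing\ne S\subseteq[3]}|u(S)|\qquad\text{for all } u\in\Z^3 .
\]
I will prove the two inequalities separately.

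For the upper bound on $\max_S|u(S)|$, fix a nonempty $S$. Every term of $u(S)$ is at most its positive part, so $u(S)\le\sum_{i\in S}u_i^+\le p(u)$. In the same way $-u(S)\le\sum_{i\in S}(-u_i)^+\le q(u)$. Hence $|u(S)|\le\max\{p(u),q(u)\}$ for every nonempty $S$.

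For the reverse inequality, let $S_+=\{i:u_i>0\}$ and $S_-=\{i:u_i<0\}$. If $S_+\neq\varnothing$, then $u(S_+)=p(u)$. If $S_+=\varnothing$, then $p(u)=0$, which is trivially at most $|u(S)|$ for any $S$, say $S=[3]$. The symmetric argument with $S_-$ gives $q(u)=-u(S_-)=|u(S_-)|$ when $S_-\ne\varnothing$, and $q(u)=0$ otherwise. So $\max\{p,q\}$ is attained by some nonempty subset, which gives equality.

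There is no real obstacle. The only points needing care are the convention that $S$ is nonempty, which matters when $S_+$ or $S_-$ is empty, and the trivial case $u=0$, where both sides are $0$. The argument works verbatim in any dimension $n$, with $[3]$ replaced by $[n]$.
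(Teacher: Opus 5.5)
Your proof is correct and takes the same approach as the paper. Both reduce to $u=x-y$, bound every subset sum above by $p(u)$ and below by $-q(u)$, and note that the positive and negative supports attain these bounds; the empty-support cases are trivial, and, as you observe, nothing depends on $n=3$.
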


\begin{proof}
For $z=x-y$,
\begin{equation}
 \max_{S\subseteq[3]}z(S)=\sum_i z_i^+=p(z),
 \qquad
 -\min_{S\subseteq[3]}z(S)=\sum_i(-z_i)^+=q(z).
\end{equation}
The positive support attains the first maximum whenever $p(z)>0$, and the negative support attains the second.  Taking the larger of the two quantities gives \eqref{eq:subset-sum-metric}.  The case $z=0$ is immediate.
\end{proof}

The same description gives a useful characterization of the candidate set.

\begin{lemma}\label{lem:subset-sum-SPQ}
For $P,Q\ge0$,
\begin{equation}\label{eq:subset-sum-SPQ}
 S_3(P,Q)=
 \left\{x\in\Z^3 \colon -Q\le x(S)\le P
       \text{ for every }\varnothing\ne S\subseteq[3]\right\}.
\end{equation}
\end{lemma}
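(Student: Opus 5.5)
The plan is to reduce Lemma~\ref{lem:subset-sum-SPQ} to the two identities already established in the proof of Lemma~\ref{lem:subset-sum-metric}. For any $x\in\Z^3$ we have
\begin{equation}
 \max_{S\subseteq[3]}x(S)=\sum_i x_i^+=p(x),
 \qquad
 \min_{S\subseteq[3]}x(S)=-\sum_i(-x_i)^+=-q(x).
\end{equation}
Here the maximum is attained at the positive support $S_+=\{i\colon x_i>0\}$ and the minimum at the negative support $S_-=\{i\colon x_i<0\}$. I will establish the two inclusions separately, after a short remark on the empty set.

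First, the empty set. Since $x(\varnothing)=0$ and $P,Q\ge0$, the condition $-Q\le x(\varnothing)\le P$ holds automatically. Hence requiring the inequalities for every nonempty $S$ is the same as requiring them for every $S\subseteq[3]$. Restricting to nonempty $S$ therefore changes nothing.

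($\subseteq$) Suppose $x\in S_3(P,Q)$, so that $p(x)\le P$ and $q(x)\le Q$. For any $S$ we have $x(S)\le\sum_{i\in S}x_i^+\le p(x)\le P$. Likewise $x(S)\ge-\sum_{i\in S}(-x_i)^+\ge-q(x)\ge-Q$.

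($\supseteq$) Suppose $-Q\le x(S)\le P$ for all nonempty $S$. If $S_+\neq\varnothing$, taking $S=S_+$ gives $p(x)=x(S_+)\le P$. If $S_+=\varnothing$, then $p(x)=0\le P$. Symmetrically, taking $S=S_-$ gives $q(x)=-x(S_-)\le Q$, or $q(x)=0\le Q$ when $S_-$ is empty. Hence $x\in S_3(P,Q)$.

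There is no real obstacle. The only point needing care is the degenerate case in which a support is empty, and this is handled by $P,Q\ge0$. The argument also clearly works verbatim in any dimension $n$.
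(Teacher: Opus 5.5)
Your proof is correct and matches the paper's argument: you bound every subset sum above by $p(x)$ and below by $-q(x)$ for one inclusion, and you apply the hypotheses to the positive and negative supports for the other. Your explicit treatment of empty supports, which uses $P,Q\ge0$, fills in a detail the paper leaves implicit.
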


\begin{proof}
If $p(x)\le P$ and $q(x)\le Q$, every subset sum is at most $p(x)$ and at least $-q(x)$.  Conversely, apply the upper inequality to the positive support of $x$ and the lower inequality to its negative support.
\end{proof}

\section{A seven-slab reduction and a discrete polarization}
\label{sec:seven-slab}

For integers $u\le v$, let $[u,v]_{\Z}=\{u,u+1,\ldots,v\}$, and interpret this interval as empty if $u>v$.

\subsection{The slab hull}

\begin{proposition}[Seven-slab reduction]\label{prop:slab-hull}
Let $\cF\subseteq\Z^3$ have diameter at most $D$.  Then there are integer intervals $I_S$ of $D+1$ consecutive integers, one for each nonempty $S\subseteq[3]$, such that
\begin{equation}
 \cF\subseteq X(\mathbf I) \coloneq
 \{x\in\Z^3 \colon x(S)\in I_S\text{ for every }\varnothing\ne S\subseteq[3]\}.
\end{equation}
Moreover, $X(\mathbf I)$ also has diameter at most $D$.
\end{proposition}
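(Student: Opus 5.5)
The plan is to read both claims off Lemma~\ref{lem:subset-sum-metric}, which expresses $\da$ as the largest discrepancy among the seven subset sums. The idea is that a diameter bound on $\cF$ amounts to a bound on the spread of each of the seven linear functionals $x\mapsto x(S)$ over $\cF$, and conversely.

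First, handle the degenerate case. If $\cF=\varnothing$, take $I_S=[0,D]_{\Z}$ for every $S$. The containment is then trivial, and the diameter part of the statement is covered by the general argument in the last step, which does not use $\cF$.

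For nonempty $\cF$, fix a nonempty $S\subseteq[3]$ and set $m_S=\min_{x\in\cF}x(S)$ and $M_S=\max_{x\in\cF}x(S)$; these exist because $\cF$ is finite, which follows from having finite diameter in $\Z^3$. Choose $x,y\in\cF$ attaining them. By Lemma~\ref{lem:subset-sum-metric},
\[
 M_S-m_S=|x(S)-y(S)|\le\da(x,y)\le D .
\]
Put $I_S=[m_S,m_S+D]_{\Z}$. This interval has exactly $D+1$ consecutive integers and contains every value $z(S)$ with $z\in\cF$. Doing this for all seven $S$ gives $\cF\subseteq X(\mathbf I)$.

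For the second claim, take any $x,y\in X(\mathbf I)$. For each nonempty $S$, both $x(S)$ and $y(S)$ lie in $I_S$, an interval of $D+1$ consecutive integers, so $|x(S)-y(S)|\le D$. Taking the maximum over $S$ and applying Lemma~\ref{lem:subset-sum-metric} again gives $\da(x,y)\le D$. This step only uses that each $I_S$ has length $D$, so it does not depend on $\cF$.

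There is no real obstacle here; the whole content is carried by Lemma~\ref{lem:subset-sum-metric}. The only point needing care is that the $I_S$ are chosen independently for each $S$, with no compatibility required among them. Consequently $X(\mathbf I)$ may be smaller than a product-like set, or even empty for a general choice of intervals, but it always contains $\cF$ for our choice.
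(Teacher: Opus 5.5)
Your proposal is correct and matches the paper's proof: both claims come from Lemma~\ref{lem:subset-sum-metric}. The width-at-most-$D$ range of each subset sum is placed in an interval of $D+1$ integers, and the reverse inequality of that lemma gives the diameter bound for $X(\mathbf I)$. Your extra handling of the empty family and of the existence of the extrema is sound, just more explicit than the paper.
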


\begin{proof}
By Lemma~\ref{lem:subset-sum-metric}, for every nonempty $S$ the integer set $\{x(S) \colon x\in\cF\}$ has width at most $D$, and is therefore contained in an interval of $D+1$ consecutive integers.  The same lemma shows that any two points of the resulting intersection are at distance at most $D$.
\end{proof}

Translate the three coordinates independently so that the singleton intervals are all $[0,D]_{\Z}$.  The remaining intervals have the form
\begin{equation}
 I_{12}=[a,a+D]_{\Z},\quad
 I_{13}=[b,b+D]_{\Z},\quad
 I_{23}=[c,c+D]_{\Z},\quad
 I_{123}=[h,h+D]_{\Z}.
\end{equation}
Define
\begin{align}
 \cX_D(a,b,c,h)=\{(x_1,x_2,x_3)\in[0,D]_{\Z}^3 \colon &
 a\le x_1+x_2\le a+D,\notag\\
 &b\le x_1+x_3\le b+D,\notag\\
 &c\le x_2+x_3\le c+D,\notag\\
 &h\le x_1+x_2+x_3\le h+D\},
 \label{eq:def-XD}
\end{align}
and let
\begin{equation}
 N_D(a,b,c,h)=|\cX_D(a,b,c,h)|.
\end{equation}
Since the total sum of a point in $[0,D]^3$ lies in $[0,3D]$, replacing $h<0$ by $0$, or $h>2D$ by $2D$, can only enlarge the set.  Hence we may assume throughout that
\begin{equation}\label{eq:h-range}
 0\le h\le2D.
\end{equation}

\subsection{The \texorpdfstring{$K_4$}{K4} edge labels}

Associate six integers with the edges of $K_4$ by
\begin{equation}\label{eq:edge-labels}
 \lambda_{12}=a,\quad \lambda_{13}=b,\quad \lambda_{23}=c,
 \qquad
 \lambda_{14}=h-c,\quad \lambda_{24}=h-b,\quad
 \lambda_{34}=h-a.
\end{equation}
Thus opposite edges satisfy
\begin{equation}\label{eq:opposite-sum}
 \lambda_{ij}+\lambda_{k\ell}=h
 \quad\text{whenever }\{i,j,k,\ell\}=[4].
\end{equation}

\begin{figure}[t]
\centering
\begin{tikzpicture}[scale=1.03, every node/.style={font=\small}]
  \coordinate (v1) at (0,2.4);
  \coordinate (v2) at (-2.2,-1.1);
  \coordinate (v3) at (2.2,-1.1);
  \coordinate (v4) at (0,-0.15);
  \draw (v1)--(v2)--(v3)--cycle;
  \draw (v4)--(v1) (v4)--(v2) (v4)--(v3);
  \fill (v1) circle (1.5pt) node[above] {$1$};
  \fill (v2) circle (1.5pt) node[below left] {$2$};
  \fill (v3) circle (1.5pt) node[below right] {$3$};
  \fill (v4) circle (1.5pt) node[below] {$4$};
  \node[left] at (-1.15,0.72) {$a$};
  \node[right] at (1.15,0.72) {$b$};
  \node[below] at (0,-1.08) {$c$};
  \node[right] at (0.18,1.18) {$h-c$};
  \node[above] at (-1.02,-0.58) {$h-b$};
  \node[above] at (1.02,-0.58) {$h-a$};
\end{tikzpicture}
\caption{The six normalized pair-sum offsets as edge labels of $K_4$.  Opposite edge labels sum to $h$.}
\label{fig:K4-labels}
\end{figure}

The labeling is displayed in Fig.~\ref{fig:K4-labels}.  Its usefulness comes from the following affine $A_3$ symmetry.

\begin{lemma}[Coordinate charts]\label{lem:K4-charts}
Put
\begin{equation}
 w=(w_1,w_2,w_3,w_4)
 =(x_1,x_2,x_3,-x_1-x_2-x_3)\in A_3.
\end{equation}
Delete any one coordinate $w_\ell$, use the other three as coordinates, and translate their singleton intervals to $[0,D]_{\Z}$.  In the resulting normalized representation, the three pair-sum lower endpoints are precisely the labels on the triangle $K_4-\ell$, and the total-sum lower endpoint is $h$.

Consequently, every permutation of the vertices of $K_4$ induces an affine lattice bijection between normalized slab systems and permutes the six edge labels.
\end{lemma}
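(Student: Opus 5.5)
The plan is to make the slab system symmetric in all four coordinates of $A_3$. Then deleting a coordinate, or permuting coordinates, becomes a bookkeeping exercise.

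\textbf{Step 1: slab constraints indexed by subsets of $[4]$.} Since $w_1+w_2+w_3+w_4=0$, we have $w(T)=-w([4]\setminus T)$ for every $T\subseteq[4]$. For $\varnothing\ne S\subseteq[3]$, the seven constraints $x(S)\in I_S$ therefore become constraints $w(T)\in J_T$ for all nonempty proper $T\subsetneq[4]$, subject to $J_{[4]\setminus T}=-J_T$. Explicitly, in the normalized chart (deleting $w_4$) we have:
\begin{itemize}
\item $J_{\{i\}}=[0,D]_{\Z}$ for $i\le 3$;
\item $J_{\{4\}}=-I_{123}=[-h-D,-h]_{\Z}$;
\item $J_{\{i,j\}}=[\lambda_{ij},\lambda_{ij}+D]_{\Z}$ for $i,j\le3$;
\item $J_{\{i,4\}}=-J_{\{k,\ell\}}$ for $\{i,k,\ell\}=[3]$.
\end{itemize}
The resulting set of admissible $w$ does not depend on which coordinate we choose to delete.

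\textbf{Step 2: the chart obtained by deleting $w_\ell$.} By the symmetry of $\{1,2,3\}$ it suffices to treat $\ell=3$; the case $\ell=4$ is the original chart. We use the coordinates $(w_1,w_2,w_4)$ and translate only the third one, setting $w_4'=w_4+h+D$, so that its singleton interval becomes $[0,D]_{\Z}$. Then:
\begin{itemize}
\item $w_1+w_2=x_1+x_2\in[a,a+D]$, which gives lower endpoint $\lambda_{12}$;
\item $w_1+w_4'=h+D-(x_2+x_3)\in[h-c,h-c+D]$, which gives $\lambda_{14}$;
\item $w_2+w_4'=h+D-(x_1+x_3)\in[h-b,h-b+D]$, which gives $\lambda_{24}$;
\item $w_1+w_2+w_4'=h+D-x_3\in[h,h+D]$, so the total-sum lower endpoint is again $h$.
\end{itemize}
Thus the pair labels are exactly those on the triangle $K_4-3$. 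This follows by a direct one-line check for each inequality, and $\ell=1,2$ follow identically after relabeling.

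\textbf{Step 3: permutations of $[4]$.} A permutation $\sigma$ of $[4]$ acts on $A_3$ by permuting coordinates. This action is a lattice automorphism preserving $\dAn$ and $\sum w_i=0$, and it carries a constraint family $(J_T)$ to $(J_{\sigma^{-1}T})$. To compare normalized systems, compose with the chart maps of Step 2. The chart map is $w\mapsto$ (the three retained coordinates), followed by a translation, and each such map is an affine lattice bijection $A_3\to\Z^3$. The composite sends $\cX_D(a,b,c,h)$ bijectively onto the normalized system whose edge labels are $\lambda_{\sigma(i)\sigma(j)}$. The total-sum endpoint is $h$ in every chart, because opposite-edge sums equal $h$ by \eqref{eq:opposite-sum} and $\sigma$ preserves the pairing of opposite edges.

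The only real obstacle is the bookkeeping: verifying that the translation normalizing the deleted-coordinate chart yields total-sum endpoint exactly $h$ (rather than $h$ shifted by $D$), and that the complement relation $J_{T^c}=-J_T$ routes each label to the correct edge. The explicit $\ell=3$ computation above settles both points, and the remaining cases follow by symmetry.
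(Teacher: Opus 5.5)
Your proof is correct and follows the paper's argument: the same explicit check for the chart that deletes $w_3$ and translates $w_4\mapsto w_4+h+D$, symmetry for the other charts, and composing a coordinate permutation of $A_3$ with a chart to obtain the final assertion. The Step~1 repackaging by complementary subsets of $[4]$ is harmless; note that your Step~2 computation already shows directly that the total-sum endpoint stays $h$ in every chart. The opposite-edge justification in Step~3 is therefore unnecessary, and on its own it would presuppose the identification of the outer labels that it is meant to support.
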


\begin{proof}
For $\ell=4$, this is the original chart with pair labels $(\lambda_{12},\lambda_{13},\lambda_{23})=(a,b,c)$.  As an illustration, delete $w_3$.  The singleton intervals for $(w_1,w_2,w_4)$ are $[0,D]$, $[0,D]$, and $[-h-D,-h]$.  Replacing $w_4$ by $w_4+h+D$ normalizes the last interval.  The pair-sum lower endpoints become
\(
 a, h-c, h-b,
\)
which are $\lambda_{12},\lambda_{14},\lambda_{24}$, and the total-sum interval is again $[h,h+D]$.  The other charts are identical by symmetry.  Permuting the four $A_3$ coordinates and then choosing a chart gives the final assertion.
\end{proof}

\subsection{A local polarization}

We first record a one-dimensional overlap fact.

\begin{lemma}\label{lem:interval-shift}
Let $J=[A,B]_{\Z}$ and $K=[L,U]_{\Z}$ be integer intervals.  If
\begin{equation}
 L+U\le A+B-1,
\end{equation}
then
\begin{equation}
 |J\cap(K+1)|\ge |J\cap K|,
 \qquad K+1=[L+1,U+1]_{\Z}.
\end{equation}
\end{lemma}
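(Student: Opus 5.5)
The plan is to compute $|J\cap K|$ and $|J\cap(K+1)|$ directly and compare them. Note that $J\cap(K+1)$ differs from $J\cap K$ by at most two points. We lose $L$ if $L\in J\cap K$, and we gain $U+1$ if $U+1\in J$ and $K$ is nonempty. More precisely, when $L\le U$ we have
\[
 |J\cap(K+1)|-|J\cap K| = [\,U+1\in J\,]-[\,L\in J\,].
\]
Indeed, $K+1=(K\setminus\{L\})\cup\{U+1\}$, with $L\in K$ and $U+1\notin K$. If $K$ is empty, both sides of the desired inequality are zero, and if $J$ is empty both sides are again zero. So assume $L\le U$ and $A\le B$.

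It therefore suffices to show that $L\in J$ implies $U+1\in J$. Equivalently, the only bad case is $A\le L\le B$ with $U+1\notin[A,B]$. Since $U\ge L\ge A$, we have $U+1>A$, so $U+1\notin J$ forces $U+1>B$, that is, $U\ge B$. Combined with $L\ge A$, this gives $L+U\ge A+B$, which contradicts the hypothesis $L+U\le A+B-1$. Hence whenever the loss term equals $1$, the gain term also equals $1$, and the difference is nonnegative.

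There is no real obstacle here. The only points needing care are the degenerate cases of empty intervals, where the identity above fails but the inequality is trivially $0\ge 0$, and checking the set identity $K+1=(K\setminus\{L\})\cup\{U+1\}$ for nonempty $K$. The hypothesis expresses that the midpoint of $K$ lies strictly to the left of the midpoint of $J$ (by at least $1/2$), so shifting $K$ right by one cannot decrease the overlap. The argument uses exactly this midpoint condition.
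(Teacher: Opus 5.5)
Your proof is correct and follows essentially the same route as the paper. The shift removes only $L$ and adds only $U+1$, and the hypothesis $L+U\le A+B-1$ forces $U+1\in J$ whenever $L\in J$. Your argument reaches this by contraposition rather than the paper's direct bound $U+1\le A+B-L\le B$, and you handle the empty-interval cases explicitly, which the paper leaves implicit.
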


\begin{proof}
Shifting $K$ one step to the right can only remove the point $L$ and add the point $U+1$.  If $L\notin J$, no point of $J\cap K$ is lost.  If $L\in J$, then
\begin{equation}
 U+1\le A+B-L\le B
\end{equation}
and $U+1\ge L+1>A$, so the new point also belongs to $J$.
\end{proof}

\begin{lemma}[Local polarization]\label{lem:local-polarization}
If $\gamma-\alpha\ge1$, then
\begin{equation}\label{eq:local-polarization}
 N_D(\alpha,\beta,\gamma,h)
 \le N_D(\alpha+1,\beta,\gamma-1,h).
\end{equation}
\end{lemma}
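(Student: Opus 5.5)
The plan is to fiber $\cX_D(\alpha,\beta,\gamma,h)$ over the pair $(x_2,\,s)$ with $s=x_1+x_3$, and to show fiber by fiber that the count does not decrease, using Lemma~\ref{lem:interval-shift}.

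\textbf{Step 1: the fibers.} Fix $x_2\in[0,D]_{\Z}$ and $s$, and parametrize the fiber by $t=x_1$, so $x_3=s-t$. The constraints involving $\beta$ and $h$ depend only on $s$ and $x_2$. So does $x_2\in[0,D]$. Hence these constraints either admit the whole fiber or kill it, and they are identical for the parameter tuples $(\alpha,\beta,\gamma,h)$ and $(\alpha+1,\beta,\gamma-1,h)$.

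The remaining constraints on $t$ are the following.
\begin{itemize}
\item[] From $x_1,x_3\in[0,D]$: $t\in J:=[\max(0,s-D),\min(D,s)]_{\Z}$. This interval does not depend on $\alpha,\gamma$, and its endpoints satisfy $A+B=s$ whenever it is nonempty.
\item[] From the $\alpha$-slab: $t\in K_1=[l_1,l_1+D]_{\Z}$ with $l_1=\alpha-x_2$.
\item[] From the $\gamma$-slab, via $\gamma\le x_2+s-t\le\gamma+D$: $t\in K_2=[l_2,l_2+D]_{\Z}$ with $l_2=s-\gamma-D+x_2$.
\end{itemize}
The fiber size is therefore $|J\cap K|$ with $K=K_1\cap K_2$.

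\textbf{Step 2: effect of the parameter change.} Replacing $\alpha$ by $\alpha+1$ shifts $l_1$ up by one. Replacing $\gamma$ by $\gamma-1$ also shifts $l_2$ up by one. So the new fiber is $J\cap(K+1)$ with the same $J$. This is exactly the situation of Lemma~\ref{lem:interval-shift}.

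\textbf{Step 3: verifying the hypothesis.} Both $K_1$ and $K_2$ have length $D$. Hence $K=[L,U]$ with $L=\max(l_1,l_2)$ and $U=\min(l_1,l_2)+D$, and in either ordering
\[
L+U=l_1+l_2+D=s+\alpha-\gamma\le s-1=A+B-1,
\]
using $\gamma-\alpha\ge1$.

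Degenerate cases need a separate check:
\begin{itemize}
\item[] If $J$ is empty, both fiber counts are zero.
\item[] If $K$ is empty ($L>U$), the left side $|J\cap K|$ is zero, so the inequality holds trivially. The lemma's proof also goes through here, but it is not needed.
\end{itemize}

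\textbf{Step 4: conclusion.} Lemma~\ref{lem:interval-shift} gives $|J\cap(K+1)|\ge|J\cap K|$ for each fiber. Summing over all $(x_2,s)$ yields \eqref{eq:local-polarization}.

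The main point needing care is Step 3, namely that the midpoint sum of $K$ is computed correctly although $K$ is an intersection of two intervals. This works because $K_1$ and $K_2$ have equal length $D$, so $L+U=l_1+l_2+D$ regardless of which lower endpoint is larger.
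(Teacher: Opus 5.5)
Your proposal is correct and follows the paper's proof essentially step for step. It uses the same fibration over $(x_2,\,x_1+x_3)$, the same three intervals for $x_1$, and the same reduction to Lemma~\ref{lem:interval-shift} via the endpoint-sum identity $L+U=s+\alpha-\gamma\le A+B-1$. Your explicit remark that $K_1$ and $K_2$ have equal length, so that $L+U=l_1+l_2+D$ whichever lower endpoint is larger, is exactly what justifies the paper's claim about the midpoint of $K$.
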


\begin{proof}
Fix $y=x_2$ and $r=x_1+x_3$.  The constraints involving $x_1+x_3$ and $x_1+x_2+x_3$ depend only on $(y,r)$ and remain unchanged.  Put $x=x_1$, so $x_3=r-x$.  The coordinate bounds give
\begin{equation}
 x\in J_0=[\max\{0,r-D\},\min\{D,r\}]_{\Z}.
\end{equation}
The pair constraints with lower endpoints $\alpha$ and $\gamma$ give
\begin{equation}
 x\in J_\alpha=[\alpha-y,\alpha+D-y]_{\Z},
\end{equation}
and
\begin{equation}
 x\in J_\gamma=[y+r-\gamma-D,y+r-\gamma]_{\Z}.
\end{equation}
If $K=J_\alpha\cap J_\gamma$ is nonempty, its midpoint is
\(
 \frac{r+\alpha-\gamma}{2},
\)
whereas the midpoint of $J_0$ is $r/2$.  The hypothesis implies that the midpoint of $K$ lies at least one half-unit to the left of the midpoint of $J_0$, or equivalently that the endpoint sums satisfy the hypothesis of Lemma~\ref{lem:interval-shift}.

Under $(\alpha,\gamma)\mapsto(\alpha+1,\gamma-1)$, both $J_\alpha$ and $J_\gamma$ shift one step to the right, so $K$ is replaced by $K+1$.  Lemma~\ref{lem:interval-shift} shows that the number of admissible values of $x$ in each fixed $(y,r)$ fiber does not decrease.  Summing over all fibers proves \eqref{eq:local-polarization}.  If $K$ is empty, the assertion is immediate.
\end{proof}

In the edge-label formulation, Lemma~\ref{lem:local-polarization} can be applied to any two nonopposite edges, since they lie in a common triangle of $K_4$.  If $e$ and $f$ are nonopposite and
\(
 \lambda_f-\lambda_e\ge2,
\)
we increase $\lambda_e$ by one, decrease $\lambda_f$ by one, and make the opposite changes on the two opposite edges so that \eqref{eq:opposite-sum} remains valid.  The cardinality does not decrease.

\begin{proposition}[Termination of polarizations]\label{prop:termination}
Starting from any normalized slab system satisfying \eqref{eq:h-range}, repeated local polarizations produce, without decreasing cardinality, a label system satisfying
\begin{equation}\label{eq:terminal-condition}
 |\lambda_e-\lambda_f|\le1
 \qquad\text{for every pair of nonopposite edges }e,f.
\end{equation}
\end{proposition}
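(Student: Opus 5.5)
We prove termination with a strictly decreasing, integer-valued potential. The natural choice is the quadratic energy
\begin{equation*}
 \Phi(\lambda)=\sum_{e}\lambda_e^2,
\end{equation*}
summed over the six edges of $K_4$. Equivalently, since opposite edges have $\lambda_e+\lambda_{\bar e}=h$, we can use $\sum_{e}(\lambda_e-h/2)^2$, which is a sum of three terms, one per pair of opposite edges.

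The first step is to check that each polarization step keeps $h$ fixed and strictly lowers $\Phi$. A step takes two nonopposite edges $e,f$ with $\lambda_f-\lambda_e\ge2$. It replaces $\lambda_e,\lambda_f$ by $\lambda_e+1,\lambda_f-1$, and their opposite edges $\bar e,\bar f$ by $\lambda_{\bar e}-1,\lambda_{\bar f}+1$. Write $\mu_g=\lambda_g-h/2$, so that $\mu_{\bar g}=-\mu_g$. Then the contribution of $\{e,\bar e\}$ is $2\mu_e^2$, and the contribution of $\{f,\bar f\}$ is $2\mu_f^2$. The step sends $\mu_e\mapsto\mu_e+1$ and $\mu_f\mapsto\mu_f-1$, so the change in energy is
\begin{equation*}
 2\big[(\mu_e+1)^2-\mu_e^2+(\mu_f-1)^2-\mu_f^2\big]=4(\mu_e-\mu_f)+4=-4(\lambda_f-\lambda_e-1)\le-4 .
\end{equation*}
The energy takes values in a discrete set ($\frac12\Z$, or $\Z$ after scaling) and is bounded below by $0$. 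Hence only finitely many steps are possible, and the process terminates exactly when no pair of nonopposite edges has $\lambda_f-\lambda_e\ge2$. That is the condition \eqref{eq:terminal-condition}.

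The second step is to confirm that each step is legitimate. By Lemma~\ref{lem:K4-charts}, any two nonopposite edges $e,f$ lie in a common triangle $K_4-\ell$, and we can pass to the chart in which that triangle carries the three pair-sum labels. Relabel inside the chart so that $e$ plays the role of $\alpha$ and $f$ the role of $\gamma$. Lemma~\ref{lem:local-polarization} then applies, and $N_D$ does not decrease. In that chart the step changes only the two pair labels $\alpha,\gamma$ and leaves $h$ alone. Translated back through the chart bijection, this is exactly the change described above, including the compensating moves on the opposite edges required by \eqref{eq:opposite-sum}. Since $h$ is never changed, the normalization \eqref{eq:h-range} is preserved.

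The main point to verify carefully is this chart bookkeeping. We must check that the affine bijection of Lemma~\ref{lem:K4-charts} carries $N_D$ to $N_D$ and that the opposite-edge adjustments are forced. Both follow from the fact that the chart reads the pair labels off the triangle, with the opposite labels determined as $h$ minus them.

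There is also a slightly subtle point in Lemma~\ref{lem:local-polarization}: it requires only $\gamma-\alpha\ge1$, whereas the step is used only when the difference is at least $2$. Restricting to differences at least $2$ is precisely what makes the energy drop strictly, which rules out cycling such as a swap of labels differing by $1$. With this restriction the energy argument goes through, and the terminal system satisfies \eqref{eq:terminal-condition} with cardinality at least the original.
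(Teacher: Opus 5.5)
Your proposal is correct and follows the paper's proof. The paper uses the same potential $\Psi(\lambda)=\sum_e(\lambda_e-h/2)^2$, which differs from your $\sum_e\lambda_e^2$ only by the constant $3h^2/2$ since $h$ is fixed. It obtains the same drop $4(\lambda_e-\lambda_f+1)<0$ per step and concludes termination at \eqref{eq:terminal-condition}; your chart-based legitimacy check via Lemma~\ref{lem:K4-charts} corresponds to the remark the paper places just before the proposition.
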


\begin{proof}
Consider
\begin{equation}
 \Psi(\lambda)=\sum_{e\in E(K_4)}
 \left(\lambda_e-\frac h2\right)^2.
\end{equation}
If two nonopposite labels $p<q-1$ are balanced to $p+1,q-1$, their opposite labels undergo the opposite changes.  The change in $\Psi$ is
\(
 4(p-q+1)<0.
\)
Thus every nontrivial polarization strictly decreases a nonnegative discrete potential, and the process terminates.  At termination no nonopposite labels differ by two or more, which is \eqref{eq:terminal-condition}.
\end{proof}

\section{Terminal systems and proof of the anticode theorem}
\label{sec:anticode}

\subsection{Classification of terminal labels}

\begin{lemma}\label{lem:terminal-classification}
Let an integer edge labeling of $K_4$ satisfy \eqref{eq:opposite-sum} and \eqref{eq:terminal-condition}.
\begin{enumerate}[label=\textnormal{(\alph*)},leftmargin=2.3em]
 \item If $h=2t+1$, every edge label is $t$ or $t+1$, each opposite pair contains one of each, and the three edges labeled $t$ form either a triangle or a star.
 \item If $h=2t$, every edge label belongs to $\{t-1,t,t+1\}$.  Each opposite pair is either $(t,t)$ or $(t-1,t+1)$, and at most one opposite pair is of the latter type.
\end{enumerate}
\end{lemma}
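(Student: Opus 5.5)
The argument is purely combinatorial on $K_4$ and uses only the opposite-sum relation \eqref{eq:opposite-sum} and the terminal condition \eqref{eq:terminal-condition}. Every edge is nonopposite to exactly four others, and any two edges either share a vertex or are opposite. The plan is to first bound the spread of all six labels and then pin the labels down using $\lambda_e+\lambda_{\bar e}=h$, where $\bar e$ denotes the edge opposite to $e$.

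\emph{Step 1: global spread.} Take $e,f$ with $\lambda_e=\min$ and $\lambda_f=\max$. If they are nonopposite, then $\max-\min\le1$. If they are opposite, $f=\bar e$. Pick any third edge $g$; it is nonopposite to both $e$ and $f$, so $\lambda_f-\lambda_e\le 2$. If equality holds, then every other edge has label exactly $\lambda_e+1$.

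\emph{Step 2: odd $h=2t+1$.} Within each opposite pair the labels sum to $2t+1$, so they are distinct. Hence the pair spans $\{\lambda,\,2t+1-\lambda\}$ with difference $|2t+1-2\lambda|$, which is odd and at most $2$ by Step 1, hence equal to $1$. So every pair is $\{t,t+1\}$, and all labels lie in $\{t,t+1\}$. The three edges labeled $t$ contain exactly one edge from each opposite pair. It remains to check that three edges of $K_4$, one from each opposite pair, form a triangle or a star. I would argue as follows: choose $e$ from the first pair; of the two choices in the second pair, both share a vertex with $e$ and either may be chosen. The third edge is then either the edge closing the path into a triangle or the edge through the common vertex, giving a star. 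An enumeration ($2^3=8$ choices: 4 triangles and 4 stars) confirms this.

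\emph{Step 3: even $h=2t$.} Each opposite pair is $(t-s,t+s)$ with $s\ge0$, and Step 1 forces $2s\le2$, so $s\in\{0,1\}$. All labels then lie in $\{t-1,t,t+1\}$. Suppose two pairs have $s=1$, say labels $t-1,t+1$ on the first pair and $t-1,t+1$ on the second. The edge labeled $t-1$ in the first pair is nonopposite to the edge labeled $t+1$ in the second pair, so they differ by $2$, contradicting \eqref{eq:terminal-condition}. Hence at most one pair is of type $(t-1,t+1)$.

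There is no real obstacle here; the only point needing care is the small geometric enumeration in Step 2 distinguishing triangles from stars, and that is a finite check.
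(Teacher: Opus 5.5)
Your proof is correct and follows essentially the paper's route: you use the same key fact, that a third edge is nonopposite to both edges of an opposite pair (which you package as the global spread bound $\max-\min\le 2$), then the same parity/sum argument, and the same contradiction when two opposite pairs are imbalanced. Your path-closing check, showing that one edge from each opposite pair gives a triangle or a star, fills in a step the paper only asserts.
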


\begin{proof}
Suppose first that $h=2t+1$.  If a label were at most $t-1$, its opposite would be at least $t+2$.  Any third edge is nonopposite to both and could not be within one of each, contradicting \eqref{eq:terminal-condition}.  The case of a label at least $t+2$ is symmetric.  Hence all labels are $t$ or $t+1$, and \eqref{eq:opposite-sum} puts one of each in every opposite pair.  Choosing one edge from each of the three opposite pairs gives either a triangle or a star.

Now let $h=2t$.  The same argument excludes labels outside $\{t-1,t,t+1\}$.  Equation \eqref{eq:opposite-sum} gives the stated possibilities for opposite pairs.  If two opposite pairs were imbalanced, a $t-1$ edge from one and a $t+1$ edge from the other would be nonopposite and differ by two.
\end{proof}

Because of \eqref{eq:h-range}, the integer $t$ in the lemma always satisfies $0\le t\le D$.  Put
\begin{equation}\label{eq:PQ-t}
 P=D-t,\qquad Q=t,
\end{equation}
and define
\begin{equation}
\label{eq:FD}
 F_D(t) \coloneq |S_3(D-t,t)|
 =\binom{D+3}{3}+(D+1)t(D-t).
\end{equation}

\subsection{Counting the terminal systems}

\begin{proposition}\label{prop:terminal-counts}
Up to the affine $S_4$ symmetry of Lemma~\ref{lem:K4-charts}, the terminal systems have the following cardinalities.
\begin{enumerate}[label=\textnormal{(\roman*)},leftmargin=2.5em]
 \item If $h=2t$ and all labels equal $t$, the cardinality is $F_D(t)$.
 \item If $h=2t$ and one opposite pair is labeled $t-1,t+1$, the cardinality is $F_D(t)-(D+1)$.
 \item If $h=2t+1$, both the triangle and the star type have cardinality
 \begin{equation}\label{eq:GD}
  G_D(t) \coloneq F_D(t)-\binom{t+2}{2}+\binom{D-t}{2}.
 \end{equation}
\end{enumerate}
\end{proposition}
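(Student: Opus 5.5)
The plan is to identify each terminal system, after an integer translation, with $S_3(P,Q)$ for $P=D-t$, $Q=t$ as in \eqref{eq:PQ-t}, up to changing one or a few layers. Lemma~\ref{lem:subset-sum-SPQ} describes $S_3(P,Q)$ by the seven conditions $-Q\le y(S)\le P$. By Lemma~\ref{lem:K4-charts} it is enough to treat one representative labeling of each type in the original chart $(a,b,c,h)$.

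For (i), set $a=b=c=t$ and $h=2t$, and substitute $y_i=x_i-t$. Then the singleton intervals $[0,D]$ become $[-t,D-t]$, the pair intervals $[t,t+D]$ become $[-t,D-t]$, and the total interval $[2t,2t+D]$ becomes $[-t,D-t]$. So $\cX_D$ is a translate of $S_3(D-t,t)$, and its cardinality is $F_D(t)$.

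For (ii), use the symmetry to put the imbalanced opposite pair on the edges $12$ and $34$: $a=t-1$, $b=c=t$, $h=2t$. With the same translation, the only change from case (i) is that the condition on $y_1+y_2$ becomes $[-Q-1,P-1]$. Hence
\[
N_D=F_D(t)-\#\{y\in S_3(P,Q): y_1+y_2=P\}+\#\{y: y_1+y_2=-Q-1,\ \text{all other six conditions}\}.
\]
On the removed layer, the total-sum bound forces $y_3\le 0$, and the conditions $y_i+y_3\ge -Q$ then cut out a polygon in the $(y_1,y_3)$-plane. On the added layer, the total-sum bound forces $y_3\ge 1$, and the conditions $y_i+y_3\le P$ cut out a polygon in the same way. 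I will count lattice points in both polygons by summing over $y_3$ and aim to show the difference is exactly $D+1$; this is the main bookkeeping step of (ii). The appendix's layer-counting formula could be used as a cross-check.

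For (iii), triangle type, take $a=b=c=t$ and $h=2t+1$, so the labels $t$ sit on triangle $123$ and $t+1$ on the opposite star. After $y_i=x_i-t$, the singleton and pair conditions are those of $S_3(P,Q)$, while the total interval becomes $[-Q+1,P+1]$. We therefore remove the layer $y(123)=-Q$ and add the layer $y(123)=P+1$.
\begin{itemize}
\item On the removed layer, $y_i=y(123)-y(\{j,k\})\le 0$ for each $i$. So the points are the nonpositive triples with sum $-Q$, and all other constraints are then automatic. There are $\binom{t+2}{2}$ of them.
\item On the added layer, each $y_i\ge 1$ for the same reason. So the points are compositions of $P+1$ into three positive parts, again with the remaining constraints automatic. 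There are $\binom{D-t}{2}$ of them.
\end{itemize}
This gives $G_D(t)$.

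The star type, with labels $t$ on the edges at vertex $4$, i.e.\ $a=b=c=t+1$ and $h=2t+1$, is not related to the triangle type by a vertex permutation. I would first try the central symmetry $x\mapsto D\mathbf 1-x$. It preserves diameter and sends every label $\lambda_e$ to $D-\lambda_e$ and $h$ to $2D-h$, so it interchanges star and triangle types with $t$ replaced by $D-1-t$.

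This is where I expect the real obstacle. A direct check suggests that the image is the triangle count $G_D(D-1-t)$, and this does not obviously equal $G_D(t)$. So I would instead count the star type directly. After $y_i=x_i-t$, the three pair intervals become $[-Q+1,P+1]$ and the total interval becomes $[-Q+1,P+1]$. I would compare with $S_3(P,Q)$ by inclusion--exclusion over the three shifted pair layers together with the total layer, verifying the count by summing over $y_3$. If this count does not reproduce $G_D(t)$, I would recheck the normalization in Lemma~\ref{lem:K4-charts} for the star labeling before proceeding.
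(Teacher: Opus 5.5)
\textbf{Verdict: genuine gap in the star case of (iii), though it is closed by a one-line identity your own approach already points to.}

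\textbf{Cases (i), (ii) and the triangle case of (iii).} These follow the paper's argument. In (ii) you use the mirror representative $a=t-1$ instead of $a=t+1$. The count you defer in (ii) does close:
\begin{itemize}
\item On the removed layer $y_1+y_2=P$, the singleton bounds force $y_1,y_2\ge 0$. The layer is then the rectangle $0\le y_1\le P$, $-Q\le y_3\le 0$, with $(P+1)(Q+1)$ points.
\item On the added layer $y_1+y_2=-Q-1$, they force $y_1,y_2\le -1$. This gives the rectangle $-Q\le y_1\le -1$, $1\le y_3\le P$, with $PQ$ points.
\end{itemize}
The net loss is $P+Q+1=D+1$.

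\textbf{The star type.} As written, your proposal does not establish its cardinality, and the obstacle you report is not real. Your central-symmetry computation is correct: $x\mapsto D\mathbf 1-x$ maps $\cX_D(t+1,t+1,t+1,2t+1)$ bijectively onto $\cX_D(t',t',t',2t'+1)$ with $t'=D-1-t$. So the star type has cardinality $G_D(D-1-t)$.

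What you missed is that $G_D$ is invariant under $t\mapsto D-1-t$. Use three facts: $F_D(s)=F_D(D-s)$, $\binom{n}{2}+\binom{n+1}{2}=n^2$, and $F_D(t+1)-F_D(t)=(D+1)(D-2t-1)$. Then
\[
 G_D(D-1-t)-G_D(t)
 =\bigl[F_D(t+1)-F_D(t)\bigr]-(D-t)^2+(t+1)^2
 =(D+1)(D-2t-1)-(D-2t-1)(D+1)=0 .
\]
Equivalently, $2G_D(t)=F_D(t)+F_D(t+1)-(D+1)$ is visibly symmetric under $t\mapsto D-1-t$. With this line your argument for (iii) is complete.

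\textbf{Comparison with the paper.} The completed argument is a genuinely different route. The paper counts the star type directly. Via Lemma~\ref{lem:K4-charts} it chooses the star at vertex $1$ ($a=b=t$, $c=t+1$), so only the intervals of $u_2+u_3$ and of the total sum shift. A two-set inclusion--exclusion over the lower and upper boundary layers then gives removed minus added $=\binom{Q+2}{2}-\binom{P}{2}$.

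Your symmetry argument avoids that count. It also explains why the two odd types agree: they are exchanged by $x\mapsto D\mathbf 1-x$ together with $t\leftrightarrow D-1-t$, and $G_D$ is invariant under that exchange. By contrast, your proposed fallback direct count uses the star at vertex $4$. That choice shifts four of the seven intervals, whereas the paper's representative shifts only two, so it is needlessly heavy.
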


\begin{proof}
Let $I=[-Q,P]_{\Z}$, with $P,Q$ as in \eqref{eq:PQ-t}.

\smallskip
\noindent\emph{Balanced even type.}
Take $a=b=c=t$ and $h=2t$.  Under the translation $u_i=x_i-t$, every nonempty subset sum $u(S)$ lies in $I$.  Lemma~\ref{lem:subset-sum-SPQ} identifies the set with $S_3(P,Q)$, giving $F_D(t)$.

\smallskip
\noindent\emph{Imbalanced even type.}
After relabeling, take
$ a=t+1 $, $ b=c=t $, $h=2t$.
After the same translation, all subset-sum intervals are $I$ except the interval for $u_1+u_2$, which is $I+1$.  Relative to $S_3(P,Q)$, the lost points satisfy $u_1+u_2=-Q$.  Since the old set is $S_3(P,Q)$, the equality $u_1+u_2=-Q$ exhausts the entire negative budget.  Hence $u_1,u_2\le0$, with $Q+1$ ordered choices, while $u_3$ must be nonnegative and can be any of $0,1,\ldots,P$.  Thus $(Q+1)(P+1)$ points are lost.

The gained points satisfy $u_1+u_2=P+1$.  The singleton upper bounds force $u_1,u_2\ge1$, giving $P$ ordered choices; the unchanged total-sum interval then forces $u_3\in\{-Q,\ldots,-1\}$, giving $Q$ choices.  Hence $PQ$ points are gained.  The net loss is
\(
 (Q+1)(P+1)-PQ=P+Q+1=D+1
\).

\smallskip
\noindent\emph{Odd triangle type.}
Take $a=b=c=t$ and $h=2t+1$.  After translation, all proper nonempty subset sums lie in $I$, while the total sum lies in $I+1$.  The removed points have total sum $-Q$.  Since they lie in $S_3(P,Q)$, this equality forces positive mass zero and negative mass $Q$; all three coordinates are nonpositive, and their negative masses form a composition of $Q$ into three parts.  Hence their number is $\binom{Q+2}{2}$.  The gained points have total sum $P+1$.  If one coordinate were nonpositive, the sum of the other two would exceed the unchanged pair-sum upper bound $P$.  Thus all three coordinates are positive, giving $\binom P2$ points.  This yields \eqref{eq:GD}.

\smallskip
\noindent\emph{Odd star type.}
After relabeling, take
$ a=b=t $, $ c=t+1 $, $ h=2t+1 $.
After translation, the intervals for
$ C=u_2+u_3 $, $ T=u_1+u_2+u_3 $
are $I+1$, while the other five subset-sum intervals are $I$.  Passing from $S_3(P,Q)$ to this system removes the union of the lower-boundary sets $\{C=-Q\}$ and $\{T=-Q\}$ and adds the union of the corresponding upper-boundary sets $\{C=P+1\}$ and $\{T=P+1\}$.  The relevant counts are
\[
\begin{array}{c|c@{\qquad}c|c}
\text{lower boundary}&\text{cardinality}&\text{upper boundary}&\text{cardinality}\\ \hline
C=-Q&(P+1)(Q+1)&C=P+1&P(Q+1)\\
T=-Q&\binom{Q+2}{2}&T=P+1&\binom{P+1}{2}\\
C=T=-Q&Q+1&C=T=P+1&P.
\end{array}
\]
For example, on $C=-Q$ the variables $u_2,u_3$ are nonpositive with total negative mass $Q$, while $u_1\in\{0,\ldots,P\}$.  On $C=P+1$, the variables $u_2,u_3$ are positive with sum $P+1$, while $u_1\in\{-Q,\ldots,0\}$.  The two total-sum rows follow similarly: $T=-Q$ forces all coordinates nonpositive, whereas $T=P+1$ forces $u_2,u_3\ge1$ and $u_1\ge0$.  The intersection rows are obtained by setting $u_1=0$.
Inclusion--exclusion shows that the number removed minus the number added is
\(
 \binom{Q+2}{2}-\binom P2
\).
The star type therefore has the same cardinality $G_D(t)$ as the triangle type.
\end{proof}

\subsection{Optimization}

\begin{lemma}
\label{lem:optimize-FG}
For $0\le t\le D-1$,
\begin{equation}\label{eq:F-difference}
 F_D(t+1)-F_D(t)=(D+1)(D-2t-1)
\end{equation}
and
\begin{equation}
\label{eq:G-average}
 2G_D(t)=F_D(t)+F_D(t+1)-(D+1).
\end{equation}
Consequently,
\begin{equation}
 G_D(t)<\max\{F_D(t),F_D(t+1)\}.
\end{equation}
\end{lemma}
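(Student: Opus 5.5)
The plan is direct algebra from the closed forms in \eqref{eq:FD} and \eqref{eq:GD}, followed by a one-line comparison.

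\emph{Step 1: the difference identity \eqref{eq:F-difference}.} The binomial term $\binom{D+3}{3}$ cancels in $F_D(t+1)-F_D(t)$. What remains is
\[
(D+1)\bigl[(t+1)(D-t-1)-t(D-t)\bigr].
\]
Expanding the bracket gives $D-2t-1$, which is \eqref{eq:F-difference}.

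\emph{Step 2: the averaging identity \eqref{eq:G-average}.} By Step~1, $F_D(t)+F_D(t+1)=2F_D(t)+(D+1)(D-2t-1)$. Subtracting $D+1$ leaves $2F_D(t)+(D+1)(D-2t-2)$. By \eqref{eq:GD}, it therefore suffices to verify
\[
2\binom{D-t}{2}-2\binom{t+2}{2}=(D+1)(D-2t-2).
\]
The left side equals $(D-t)(D-t-1)-(t+2)(t+1)$. This is a difference of two quadratics in $t$, so it factors as a difference of products. Writing $m=D-t$ and $k=t+1$, we get $m(m-1)-k(k+1)=(m-k)(m+k)-(m+k)=(m+k)(m-k-1)$. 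Here $m+k=D+1$ and $m-k-1=D-2t-2$, which gives the identity. This factorization is the only place where a small computation is needed, and it is routine.

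\emph{Step 3: the strict inequality.} By \eqref{eq:G-average}, $G_D(t)$ is the average of $F_D(t)$ and $F_D(t+1)$ minus $(D+1)/2$. Since $D+1>0$,
\[
G_D(t)\le\max\{F_D(t),F_D(t+1)\}-\tfrac{D+1}{2}<\max\{F_D(t),F_D(t+1)\}.
\]
This holds because an average never exceeds the maximum of the two terms. The restriction $0\le t\le D-1$ is used only so that both $F_D(t)$ and $F_D(t+1)$ correspond to admissible pairs $(P,Q)$ with $P,Q\ge0$.
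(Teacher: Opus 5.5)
Your proposal is correct and follows the same route as the paper, which also obtains both identities by direct substitution into \eqref{eq:FD} and \eqref{eq:GD} and deduces the strict inequality from $D+1>0$. Your factorization $(m+k)(m-k-1)$ and the averaging step just make that substitution explicit.
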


\begin{proof}
Both identities follow by direct substitution from \eqref{eq:FD} and \eqref{eq:GD}.  The strict inequality follows because $D+1>0$.
\end{proof}

\begin{proof}[Proof of Theorem~\ref{thm:main-anticode}]
By Proposition~\ref{prop:slab-hull}, it suffices to bound a normalized slab system $\cX_D(a,b,c,h)$.  We may assume \eqref{eq:h-range}.  Proposition~\ref{prop:termination} transforms the system, without decreasing its cardinality, into a terminal label system.  Lemma~\ref{lem:terminal-classification} and Proposition~\ref{prop:terminal-counts} show that its cardinality is one of
\begin{equation}
 F_D(t),\qquad F_D(t)-(D+1),\qquad G_D(t).
\end{equation}
The second is smaller than the first, and Lemma~\ref{lem:optimize-FG} bounds the third by a neighboring value of $F_D$.  Finally, \eqref{eq:F-difference} shows that $F_D(t)$ is maximized at
\begin{equation}
 t=Q_D=\left\lfloor\frac D2\right\rfloor
\quad\text{or}\quad
 t=P_D=\left\lceil\frac D2\right\rceil.
\end{equation}
The maximum equals
\begin{equation}
 \binom{D+3}{3}+(D+1)P_DQ_D
 =\binom{D+3}{3}+(D+1)\left\lfloor\frac{D^2}{4}\right\rfloor.
\end{equation}
Lemma~\ref{lem:SPQ-diameter-count} shows that $S_{3,D}$ has diameter $D$ and attains this value.
\end{proof}

Note that the proof determines the optimal cardinality and exhibits extremal families, but it does not classify all equality cases.

\section{Lattice tilings and a simplex-packing obstruction}
\label{sec:tiling}

\subsection{Packings by the balanced anticodes}

Let $L\le\Z^n$ be a full-rank sublattice, and put
\begin{equation}
 \da(L)=\min\{\da(0,\ell) \colon 0\ne\ell\in L\}.
\end{equation}

\begin{lemma}
\label{lem:difference-body-discrete}
If $P+Q=D$, then
\begin{equation}
\label{eq:difference-body-discrete}
 S_n(P,Q)-S_n(P,Q)
 =\{z\in\Z^n \colon \da(z,0)\le D\}.
\end{equation}
Consequently, $S_n(P,Q)+L$ is a lattice packing if and only if $\da(L)>D$, and it is a lattice tiling if and only if, in addition,
\begin{equation}
 [\Z^n:L]=|S_n(P,Q)|.
\end{equation}
\end{lemma}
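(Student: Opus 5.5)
The plan is to prove the set identity \eqref{eq:difference-body-discrete} by two inclusions, and then deduce the packing and tiling criteria by the standard difference-set argument.

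\emph{Inclusion ``$\subseteq$''.} Take $x,y\in S_n(P,Q)$ and put $z=x-y$. Subadditivity of $u\mapsto u^+$ gives
\[
 p(z)\le p(x)+q(y)\le P+Q=D, \qquad q(z)\le q(x)+p(y)\le Q+P=D .
\]
Hence $\da(z,0)=\max\{p(z),q(z)\}\le D$. This is the computation already made in Lemma~\ref{lem:SPQ-diameter-count}.

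\emph{Inclusion ``$\supseteq$''.} This is the only step with real content. Let $z\in\Z^n$ with $p(z)\le D$ and $q(z)\le D$, and let $z^+$ and $z^-$ denote the positive and negative parts coordinatewise, so that $z=z^+-z^-$, $|z^+|_1=p(z)$ and $|z^-|_1=q(z)$. We must write $z=x-y$ with $p(x),p(y)\le P$ and $q(x),q(y)\le Q$. The idea is to decompose $z^+=x'+y''$ into nonnegative integer vectors with $|x'|_1\le P$ and $|y''|_1\le Q$; this is possible because $|z^+|_1\le P+Q$, by greedily filling $x'$ up to mass $P$ and putting the remainder into $y''$. Similarly, decompose $z^-=y'+x''$ with $|y'|_1\le P$ and $|x''|_1\le Q$. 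Then set
\[
 x=x'-x'', \qquad y=y'-y''.
\]
Indeed $x-y=x'+y''-x''-y'=z^+-z^-=z$.

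The main obstacle is that $p(x)$ need not equal $|x'|_1$ if $x'$ and $x''$ share support. To avoid this, I will note that $z^+$ and $z^-$ have disjoint supports. Since $x'\le z^+$ and $x''\le z^-$ coordinatewise, $x'$ and $x''$ have disjoint supports, and likewise $y'$ and $y''$. Therefore $p(x)=|x'|_1\le P$, $q(x)=|x''|_1\le Q$, $p(y)=|y'|_1\le P$ and $q(y)=|y''|_1\le Q$. So $x,y\in S_n(P,Q)$, which proves the identity.

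\emph{Consequences.} The translates $S_n(P,Q)+\ell$, $\ell\in L$, are pairwise disjoint if and only if no nonzero $\ell\in L$ lies in $S_n(P,Q)-S_n(P,Q)$. By \eqref{eq:difference-body-discrete} this holds if and only if every nonzero $\ell$ has $\da(\ell,0)>D$, that is, $\da(L)>D$. For the tiling statement, assume we have a packing. Each coset of $L$ meets $S_n(P,Q)$ in at most one point, so $|S_n(P,Q)|\le[\Z^n:L]$. The translates cover $\Z^n$ exactly when every coset is hit, which for a finite index happens exactly when $[\Z^n:L]=|S_n(P,Q)|$. Full rank guarantees that the index is finite.
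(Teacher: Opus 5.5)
Your proposal is correct and follows essentially the same route as the paper: the paper also splits the positive part of $z$ between the positive part of $x$ and the negative part of $y$, and the negative part of $z$ between the negative part of $x$ and the positive part of $y$, using disjointness of supports to read off $p$ and $q$. The only difference is that on the positive support the paper gives $x$ mass $\max\{0,p(z)-Q\}$ instead of your $\min\{p(z),P\}$, which is immaterial, and your packing/tiling consequences match the paper's argument via injectivity of $\Z^n\to\Z^n/L$ on $S_n(P,Q)$.
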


\begin{proof}
One inclusion in \eqref{eq:difference-body-discrete} follows from the diameter bound.  For the converse, let $z\in\Z^n$ satisfy
\begin{equation}
 A \coloneq p(z)\le D,\qquad B \coloneq q(z)\le D.
\end{equation}
On the positive support of $z$, choose integers $0\le \alpha_i\le z_i$ with
\begin{equation}
 \sum_{z_i>0}\alpha_i=A_1 \coloneq \max\{0,A-Q\}.
\end{equation}
Such a choice exists because $A_1\le A$.  Put $\beta_i=z_i-\alpha_i$ there.  Then
$ \sum\alpha_i=A_1\le P $, $ \sum\beta_i=A-A_1\le Q $.
On the negative support choose integers $0\le\gamma_i\le -z_i$ with
\begin{equation}
 \sum_{z_i<0}\gamma_i=B_1 \coloneq \max\{0,B-P\},
\end{equation}
and put $\eta_i=-z_i-\gamma_i$.  Thus
$ \sum\gamma_i=B_1\le Q $, $ \sum\eta_i=B-B_1\le P $.
Define
\begin{equation}
 x_i=\begin{cases}
      \alpha_i,&z_i>0,\\
      -\gamma_i,&z_i<0,\\
      0,&z_i=0,
     \end{cases}
 \qquad
 y_i=\begin{cases}
      -\beta_i,&z_i>0,\\
      \eta_i,&z_i<0,\\
      0,&z_i=0.
     \end{cases}
\end{equation}
Then $x,y\in S_n(P,Q)$ and $z=x-y$.

Two lattice translates meet precisely when their translation-vector difference belongs to \eqref{eq:difference-body-discrete}.  In a packing, the quotient map $\Z^n\to\Z^n/L$ is injective on $S_n(P,Q)$.  It is bijective, and hence gives a tiling, exactly when the tile size equals the quotient order.
\end{proof}

In dimension three, Theorem~\ref{thm:main-anticode} and Lemma~\ref{lem:difference-body-discrete} give the promised equivalence.  We use the standard diameter-perfect terminology from the code--anticode literature; see, for example, \cite{AhlswedeAydinianKhachatrian2001}.

\begin{corollary}
\label{cor:diam-perfect-tiling}
A full-rank lattice $L\le\Z^3$ is a diameter-perfect code of minimum distance $D+1$ if and only if $S_{3,D}+L$ is a lattice tiling.
\end{corollary}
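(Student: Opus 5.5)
The plan is to prove the two implications separately, using Lemma~\ref{lem:difference-body-discrete} to translate between minimum distance and packing, and Theorem~\ref{thm:main-anticode} to identify the index.

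For a full-rank lattice $L\le\Z^3$ of minimum distance $D+1$, diameter-perfect means $\da(L)=D+1$, hence $\da(L)>D$, together with
\[
 [\Z^3:L]=M_3(D).
\]
This is the notion in $(A_3,\dAn)$, transported to $(\Z^3,\da)$ via the isometry of Lemma~\ref{lem:isometry}.

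\emph{Forward direction.} Assume $L$ is diameter-perfect. Since $\da(L)>D$ and $S_{3,D}=S_3(P_D,Q_D)$ with $P_D+Q_D=D$, Lemma~\ref{lem:difference-body-discrete} gives that $S_{3,D}+L$ is a lattice packing. By Theorem~\ref{thm:main-anticode} (or directly Lemma~\ref{lem:S3-count}), $|S_{3,D}|=M_3(D)=[\Z^3:L]$. The tiling criterion of the same lemma then applies, so $S_{3,D}+L$ is a tiling.

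\emph{Reverse direction.} Assume $S_{3,D}+L$ is a lattice tiling. Then it is in particular a packing, so Lemma~\ref{lem:difference-body-discrete} gives $\da(L)>D$. Counting residues gives $[\Z^3:L]=|S_{3,D}|=M_3(D)$.

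It remains to check that the minimum distance is exactly $D+1$ rather than larger. I expect this to be the only delicate point, and it is mild; there are two ways to settle it.

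\begin{itemize}
\item \emph{Via the difference set.} Suppose $\da(L)\ge D+2$. Then $S_{3,D}$ could be enlarged: $S_3(P_D+1,Q_D)$ has diameter $D+1$, so by Lemma~\ref{lem:difference-body-discrete} its difference set avoids $L\setminus\{0\}$. Hence it would also inject into $\Z^3/L$, which is impossible because it strictly contains $S_{3,D}$ and $|S_{3,D}|$ already equals the index.
\item \emph{Via anticode versus packing.} Alternatively, if the standard definition only asks that $\da(L)\ge D+1$, then no extra argument is needed.
\end{itemize}

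In either reading, $L$ has minimum distance $D+1$ and index equal to the maximum anticode size $M_3(D)$, so it is diameter-perfect.
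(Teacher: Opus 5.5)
Your proof is correct and follows the paper's argument. The forward direction uses Lemma~\ref{lem:difference-body-discrete} plus the index count, and the converse rules out $\da(L)\ge D+2$ by packing a strictly larger diameter-$(D+1)$ set into $\Z^3/L$; the paper uses $S_{3,D+1}$ with $M_3(D+1)>M_3(D)$, while your $S_3(P_D+1,Q_D)\supsetneq S_{3,D}$ does the same job. Since the paper's definition requires minimum distance exactly $D+1$, your first bullet is the argument needed, and the second hedge can be dropped.
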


\begin{proof}
If $\da(L)=D+1$ and $[\Z^3:L]=M_3(D)$, Lemma~\ref{lem:difference-body-discrete} gives a packing of density one, hence a tiling.  Conversely, a tiling gives $[\Z^3:L]=M_3(D)$ and $\da(L)>D$.  If $\da(L)\ge D+2$, then $S_{3,D+1}+L$ would be a packing, contradicting
\begin{equation}
 |S_{3,D+1}|=M_3(D+1)>M_3(D)=[\Z^3:L].
\end{equation}
Thus $\da(L)=D+1$.
\end{proof}

\subsection{The integrality-refined simplex bound}

\begin{theorem}[Simplex-packing obstruction]\label{thm:simplex-obstruction}
Let $L\le\Z^n$ be a full-rank lattice with $\da(L)>D$.  Then
\begin{equation}
 ((D+1)\Delta_n,L)
\end{equation}
is a lattice packing in $\R^n$.  Consequently, if $S_{n,D}+L$ is a lattice tiling, then
\begin{equation}
\label{eq:simplex-obstruction}
 \frac{(D+1)^n}{n!\,|S_{n,D}|}\le\deltaL(\Delta_n).
\end{equation}
\end{theorem}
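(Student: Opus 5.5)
The plan is to use the standard criterion for lattice packings of a convex body: for a convex body $K$ and a lattice $L$, the translates $K+\ell$, $\ell\in L$, have pairwise disjoint interiors if and only if no nonzero $\ell\in L$ lies in the interior of the difference body $K-K$. Indeed, $\operatorname{int}(K+\ell)\cap\operatorname{int}(K+\ell')\ne\varnothing$ exactly when $\ell-\ell'\in\operatorname{int}K-\operatorname{int}K=\operatorname{int}(K-K)$. So the whole proof reduces to describing $\operatorname{int}\big((D+1)(\Delta_n-\Delta_n)\big)$ and checking its integer points.

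First I will compute the continuous difference body, in analogy with Lemma~\ref{lem:difference-body-discrete}. I claim
\[
 \Delta_n-\Delta_n=\{z\in\R^n \colon p(z)\le 1,\ q(z)\le 1\},
\]
with $p,q$ extended to real vectors by the same formulas. For the inclusion $\subseteq$, take $x,y\in\Delta_n$. Then $p(x-y)\le\sum_i x_i\le1$ and $q(x-y)\le\sum_i y_i\le1$. For the inclusion $\supseteq$, write $z=z^+-z^-$ with $z^\pm$ the positive and negative parts. Both $z^+$ and $z^-$ lie in $\Delta_n$, since $p(z)\le1$ and $q(z)\le1$. Scaling then gives
\[
(D+1)(\Delta_n-\Delta_n)=\{p\le D+1,\ q\le D+1\}.
\]

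Next I will show that the interior of this body is $\{z \colon p(z)<D+1,\ q(z)<D+1\}$. The functions $p$ and $q$ are continuous, so the strict set is open and lies in the interior. Conversely, suppose $p(z)=D+1$. Since $p$ is positively homogeneous, the points $(1+\varepsilon)z$ satisfy $p((1+\varepsilon)z)=(1+\varepsilon)(D+1)>D+1$, so they lie outside the body and converge to $z$. Hence $z$ is not interior, and the same argument applies when $q(z)=D+1$. I expect this boundary step to be the main point needing care.

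This is where integrality enters. For $z\in\Z^n$, both $p(z)$ and $q(z)$ are integers, so $z$ lies in the interior exactly when $\max\{p(z),q(z)\}\le D$, that is, when $\da(z,0)\le D$. Every nonzero $\ell\in L$ has $\da(\ell,0)>D$, so none lies in the interior. Hence $((D+1)\Delta_n,L)$ is a lattice packing.

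For the consequence, I compute the density of this packing as
\[
\frac{\vol((D+1)\Delta_n)}{\det L}=\frac{(D+1)^n}{n!\,[\Z^n:L]}.
\]
By definition it is at most $\deltaL(\Delta_n)$. If $S_{n,D}+L$ is a lattice tiling, Lemma~\ref{lem:difference-body-discrete} gives both $\da(L)>D$ and $[\Z^n:L]=|S_{n,D}|$. Substituting these into the density bound yields \eqref{eq:simplex-obstruction}.
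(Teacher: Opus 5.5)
Your proof is correct and is essentially the paper's argument: a nonzero $\ell\in L$ lying in $\operatorname{int}K-\operatorname{int}K$ would have $p(\ell),q(\ell)<D+1$, hence $p(\ell),q(\ell)\le D$ by integrality, which contradicts $\da(L)>D$; the density computation that follows is the same. The only difference is that you first derive an exact description of $(D+1)(\Delta_n-\Delta_n)$ and its interior, whereas the paper writes $\ell=u-v$ with $u,v$ interior points and bounds $p(\ell)\le\sum_i u_i<D+1$ directly, so the reverse inclusions you establish are not actually needed.
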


\begin{proof}
Suppose that the interiors of $(D+1)\Delta_n$ and $(D+1)\Delta_n+\ell$ overlap for some $0\ne\ell\in L$.  Then $\ell=u-v$ for points $u,v$ in the interior of $(D+1)\Delta_n$.  Since $u_i,v_i>0$ and $\sum_i u_i,\sum_i v_i<D+1$,
\begin{equation}
 p(\ell)=\sum_{\ell_i>0}(u_i-v_i)
 \le\sum_i u_i<D+1,
\end{equation}
and similarly $q(\ell)<D+1$.  Both are integers, so $p(\ell),q(\ell)\le D$, contrary to $\da(L)>D$.  Hence the simplex translates have disjoint interiors.

If $S_{n,D}+L$ tiles, then $\det L=[\Z^n:L]=|S_{n,D}|$.  The induced simplex packing has density
\begin{equation}
 \frac{\vol((D+1)\Delta_n)}{\det L}
 =\frac{(D+1)^n}{n!\,|S_{n,D}|},
\end{equation}
which cannot exceed $\deltaL(\Delta_n)$.
\end{proof}

The replacement of $D$ by $D+1$ is a finite, integrality-based strengthening of the discrete-to-continuous simplex-packing connection used in \cite{KovacevicTan2017}.

\subsection{Dimension $n=3$}

For later use, write the cardinalities according to parity:
\begin{equation}\label{eq:M3-even}
 M_3(2r)=\frac{(2r+1)(5r^2+5r+3)}{3},
\end{equation}
\begin{equation}\label{eq:M3-odd}
 M_3(2r-1)=\frac{2r(5r^2+1)}{3}.
\end{equation}
Hoylman proved
\begin{equation}\label{eq:Hoylman}
 \deltaL(\Delta_3)=\frac{18}{49}.
\end{equation}

\begin{proposition}\label{prop:no-tiling-Dge3}
For every $D\ge3$, the set $S_{3,D}$ does not lattice-tile $\Z^3$.
\end{proposition}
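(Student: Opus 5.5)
Assume, for a contradiction, that $S_{3,D}+L$ is a lattice tiling for some $D\ge3$. Theorem~\ref{thm:simplex-obstruction} with $n=3$ and Hoylman's value \eqref{eq:Hoylman} then give
\[
 \frac{(D+1)^3}{6\,M_3(D)}\le\frac{18}{49},
 \qquad\text{that is,}\qquad
 49(D+1)^3\le 108\,M_3(D).
\]
It therefore suffices to show that this inequality fails for every $D\ge3$. I split into the two parities and use the closed forms \eqref{eq:M3-even} and \eqref{eq:M3-odd}.

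For even $D=2r$ with $r\ge2$, the inequality reads $49(2r+1)^3\le 36(2r+1)(5r^2+5r+3)$. Dividing by $2r+1$, it becomes $49(4r^2+4r+1)\le 180r^2+180r+108$. Rearranged, this is $16r^2+16r\le 59$, which fails already at $r=2$, since $96>59$. The left side increases in $r$, so it fails for all $r\ge2$.

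For odd $D=2r-1$ with $r\ge2$, we have $D+1=2r$. The inequality reads $49\cdot 8r^3\le 108\cdot\frac{2r(5r^2+1)}{3}=72r(5r^2+1)$. Dividing by $8r$, it becomes $49r^2\le 9(5r^2+1)$, i.e.\ $4r^2\le 9$. This fails for all $r\ge2$, which covers every odd $D\ge3$.

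Combining the two cases, \eqref{eq:simplex-obstruction} is violated for every $D\ge3$, so no such lattice $L$ exists. As a sanity check on the method, the bound is consistent at $D=1$ ($r=1$: $4\le9$) and at $D=2$ ($r=1$: $32\le59$), which fits the fact that tilings exist there. The only delicate point is using the exact parity formulas, and in particular the extra $+1$ from the integrality refinement, i.e.\ $(D+1)^3$ rather than $D^3$. The arithmetic itself is routine.
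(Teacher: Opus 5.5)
Your proof is correct and follows essentially the same route as the paper: you combine Theorem~\ref{thm:simplex-obstruction} with Hoylman's value $18/49$, split by the parity of $D$ using \eqref{eq:M3-even} and \eqref{eq:M3-odd}, and reduce to $4r^2>9$ (the paper writes $8r^2>18$) and $16r^2+16r>59$ for $r\ge2$. Your arithmetic checks out, and the two parity cases together cover every $D\ge3$.
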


\begin{proof}
First let $D=2r-1$ with $r\ge2$.  If a lattice tiling existed, Theorem~\ref{thm:simplex-obstruction}, \eqref{eq:M3-odd}, and \eqref{eq:Hoylman} would imply
\begin{equation}
 \frac{(2r)^3}{6M_3(2r-1)}
 =\frac{2r^2}{5r^2+1}
 \le\frac{18}{49}.
\end{equation}
But the reverse strict inequality is equivalent to $8r^2>18$, which holds for $r\ge2$.

Now let $D=2r$ with $r\ge2$.  Equations \eqref{eq:M3-even} and \eqref{eq:simplex-obstruction} would give
\begin{equation}
 \frac{(2r+1)^2}{2(5r^2+5r+3)}\le\frac{18}{49}.
\end{equation}
The reverse strict inequality is equivalent to
\(
 16r^2+16r-59>0
\),
which holds for every $r\ge2$.
\end{proof}

\begin{proof}[Proof of Theorem~\ref{thm:main-tiling}]
The known constructions give lattice tilings for $D=1$ in every dimension and for $D=2$ in every prime-power dimension; see \cite[Theorems 9 and 11]{KovacevicTan2018}.  In particular, both exist in dimension three.
Proposition~\ref{prop:no-tiling-Dge3} excludes all $D\ge3$, and Corollary~\ref{cor:diam-perfect-tiling} gives the coding-theoretic formulation.
\end{proof}

The diameter-perfect lattices for $D=1,2$ can be given explicitly:
\begin{align}
L_1 &= \{ x \colon x_1 + 2x_2 + 3x_3 = 0 \pmod 4\} , \\
L_2 &= \{ x \colon x_1 + 3x_2 + 9x_3 = 0 \pmod{13}\} .
\end{align}

\section{Consequences for \texorpdfstring{$B_h$}{Bh} sets}
\label{sec:Bh}

Let $G$ be a finite Abelian group, written additively.  A set
\begin{equation}
 B=\{0,b_1,\ldots,b_n\}\subseteq G
\end{equation}
is a $B_h$ set if all sums
\begin{equation}
 \alpha_1b_1+\cdots+\alpha_nb_n,
 \qquad
 \alpha_i\in\Z_{\ge0},\quad \sum_i\alpha_i\le h,
\end{equation}
are distinct.

The standard group-splitting correspondence associates with $B$ the lattice
\begin{equation}
\label{eq:Bh-lattice}
 L_B=\left\{x\in\Z^n \colon \sum_{i=1}^n x_ib_i=0\text{ in }G\right\}.
\end{equation}
If $B$ generates $G$, then
\begin{equation}\label{eq:Bh-correspondence}
 \da(L_B)>h,
 \qquad
 \Z^n/L_B\cong G,
 \qquad
 \det L_B=|G|.
\end{equation}
Conversely, every full-rank lattice of asymmetric Manhattan distance greater than $h$ gives a generating $B_h$ set in its quotient group.  This correspondence and its applications to $A_n$ codes and simplex packings are discussed in \cite{Kovacevic2022,KovacevicTan2017,KovacevicTan2018}.

Let $\phi(h,n)$ be the smallest order of an Abelian group containing a $B_h$ set of cardinality $n+1$.  In minimizing the group order, one may assume that the set generates the ambient group, since otherwise the group may be replaced by the subgroup generated by the set.

\begin{corollary}[A finite simplex bound for $B_h$ sets]\label{cor:phi-bound}
For all $h,n\ge1$,
\begin{equation}
\label{eq:phi-bound}
 \phi(h,n)\ge
 \left\lceil
 \frac{(h+1)^n}{n!\,\deltaL(\Delta_n)}
 \right\rceil.
\end{equation}
In particular,
\begin{equation}
 \phi(h,3)\ge
 \left\lceil\frac{49(h+1)^3}{108}\right\rceil.
\end{equation}
\end{corollary}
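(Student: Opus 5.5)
The plan is to reduce the group-theoretic statement to the lattice statement of Theorem~\ref{thm:simplex-obstruction}. Let $G$ be an Abelian group of order $\phi(h,n)$ containing a $B_h$ set $B=\{0,b_1,\ldots,b_n\}$. By the remark preceding the corollary, we may assume $B$ generates $G$. Then the lattice $L_B$ in \eqref{eq:Bh-lattice} is full rank, since it contains $|G|\Z^n$. By \eqref{eq:Bh-correspondence}, it satisfies $\da(L_B)>h$ and $\det L_B=|G|$.

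For completeness, I would briefly verify $\da(L_B)>h$ directly. If $0\ne x\in L_B$ had $p(x),q(x)\le h$, write $x=x^+-x^-$. Then $\sum x_i^+b_i=\sum x_i^-b_i$ gives two distinct representations with coefficient sums at most $h$, which contradicts the $B_h$ property.

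Next I apply Theorem~\ref{thm:simplex-obstruction} with $D=h$. This shows that $((h+1)\Delta_n,L_B)$ is a lattice packing in $\R^n$. Its density is $\vol((h+1)\Delta_n)/\det L_B=(h+1)^n/(n!\,|G|)$, and it is bounded by $\deltaL(\Delta_n)$. Rearranging gives $|G|\ge (h+1)^n/(n!\,\deltaL(\Delta_n))$. Since $|G|$ is an integer, we may take the ceiling, which yields \eqref{eq:phi-bound}.

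For $n=3$, I substitute Hoylman's value \eqref{eq:Hoylman} and compute $3!\cdot 18/49=108/49$, which gives $\lceil 49(h+1)^3/108\rceil$.

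There is no real obstacle here. The only points needing care are that the packing part of Theorem~\ref{thm:simplex-obstruction} requires only $\da(L)>D$ and not a tiling, and that reducing to a generating set does not increase the group order. Both points are already stated earlier in the paper.
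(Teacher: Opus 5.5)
Your proposal is correct and is essentially the paper's proof: apply Theorem~\ref{thm:simplex-obstruction} with $D=h$ to the lattice $L_B$ of \eqref{eq:Bh-lattice} for a generating $B_h$ set in a group of order $\phi(h,n)$, use $\det L_B=|G|$ to get the density bound, and substitute Hoylman's value $18/49$ for $n=3$. Your direct check of $\da(L_B)>h$ via $x=x^+-x^-$ fills in the correspondence \eqref{eq:Bh-correspondence}, which the paper cites rather than proves.
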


\begin{proof}
Apply Theorem~\ref{thm:simplex-obstruction} to the lattice in \eqref{eq:Bh-lattice} and use \eqref{eq:Bh-correspondence}.
\end{proof}

Following the terminology suggested in \cite{KovacevicTan2017}, call a generating $B_h$ set of cardinality $n+1$ \emph{perfect} if the corresponding lattice tiles $\Z^n$ by $S_{n,h}$; equivalently,
\(
 |G|=|S_{n,h}|
\).

\begin{corollary}[Perfect $B_h$ sets of cardinality four]\label{cor:perfect-Bh-four}
A perfect $B_h$ set of cardinality four exists if and only if $h\in\{1,2\}$.
\end{corollary}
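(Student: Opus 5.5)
The plan is to translate Theorem~\ref{thm:main-tiling} through the $B_h$--lattice correspondence \eqref{eq:Bh-lattice}--\eqref{eq:Bh-correspondence} with $n=3$ and $D=h$. Let $B=\{0,b_1,b_2,b_3\}$ be a perfect $B_h$ set of cardinality four in $G$. By definition $B$ generates $G$, and $L_B$ tiles $\Z^3$ by $S_{3,h}$. Hence $S_{3,h}$ lattice-tiles $\Z^3$, and Proposition~\ref{prop:no-tiling-Dge3} (equivalently, (ii)$\Rightarrow$(iii) of Theorem~\ref{thm:main-tiling} with $D=h$) gives $h\in\{1,2\}$. I would also record the shortcut via Corollary~\ref{cor:phi-bound}: $|G|=|S_{3,h}|=M_3(h)$ must be at least $(h+1)^3/(6\deltaL(\Delta_3))$, which is the same inequality already checked in Proposition~\ref{prop:no-tiling-Dge3}.

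For the converse, take $h\in\{1,2\}$ and the explicit lattices $L_1$ and $L_2$ from Section~\ref{sec:tiling}. Here $L_h=\{x\colon x_1c_1+x_2c_2+x_3c_3\equiv 0 \pmod m\}$ with $(c,m)=((1,2,3),4)$ or $((1,3,9),13)$. This is exactly $L_B$ for $B=\{0,c_1,c_2,c_3\}\subseteq\Z_m$. Since $c_1=1$, $B$ generates $\Z_m$, and $\Z^3/L_h\cong\Z_m$ has order $m=M_3(h)$; note $M_3(1)=4$ and $M_3(2)=10+3=13$. Because $L_h$ tiles by $S_{3,h}$ (Theorem~\ref{thm:main-tiling}, or the cited constructions), Lemma~\ref{lem:difference-body-discrete} gives $\da(L_h)>h$. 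By the converse direction of the correspondence, $B$ is then a generating $B_h$ set with $|G|=|S_{3,h}|$, so it is perfect.

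The main point to watch is that the packing notion for $S_{n,h}$ (the unbalanced set $S_n(P_h,Q_h)$) matches the $B_h$ condition, whose sums involve only nonnegative coefficients with total at most $h$. Two such sums coincide iff a nonzero $x\in L_B$ has $p(x),q(x)\le h$, i.e.\ $\da(x,0)\le h$. This is exactly $\da(L_B)>h$, which by Lemma~\ref{lem:difference-body-discrete} is equivalent to $S_n(P,Q)+L_B$ being a packing for any $P+Q=h$. I would state this verification in one line, so that both directions reduce cleanly to Theorem~\ref{thm:main-tiling}. As a sanity check, one can also verify the small cases directly, e.g.\ that the $B_1$ sums $0,1,2,3$ are distinct in $\Z_4$.
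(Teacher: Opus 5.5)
Your proposal is correct and follows essentially the same route as the paper: both directions pass through the group-splitting correspondence and Theorem~\ref{thm:main-tiling}, and your witnesses $\{0,1,2,3\}\subseteq\Z_4$ and $\{0,1,3,9\}\subseteq\Z_{13}$ are exactly the paper's choices (a group of order four, and Singer's planar difference set). One small imprecision: Theorem~\ref{thm:main-tiling} only asserts that some tiling exists, so the fact that these particular $L_1,L_2$ tile should be cited from the explicit display after it or checked directly. For $L_2$, for instance, the $13$ points of $S_3(1,1)$ map to $0,\pm1,\pm2,\pm3,\pm6,\pm8,\pm9$, which exhaust $\Z_{13}$.
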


\begin{proof}
The group-splitting correspondence identifies perfect $B_h$ sets of cardinality four with lattice tilings of $\Z^3$ by $S_{3,h}$.  Apply Theorem~\ref{thm:main-tiling}.  For $h=1$, one may take all elements of a group of order four; for $h=2$, Singer's planar difference set of order three gives a set of cardinality four in a group of order thirteen \cite{Singer1938}.
\end{proof}

\begin{remark}
The bound \eqref{eq:phi-bound} replaces the factor $h^n$ in the elementary continuous-simplex lower bound by $(h+1)^n$.  It is exact for every $h$ when $n=1$ or $n=2$, using $\deltaL(\Delta_1)=1$, $\deltaL(\Delta_2)=2/3$, and the known perfect constructions in these dimensions \cite{KovacevicTan2017,KovacevicTan2018}.
\end{remark}

\section{A conjecture in arbitrary dimension and intersecting multisets}
\label{sec:conjecture}

Define
\begin{equation}
 A_n(D)=\max\{|\cF| \colon \cF\subseteq\Z^n,\ \diam_{\da}(\cF)\le D\}.
\end{equation}
Theorem~\ref{thm:main-anticode} says $A_3(D)=|S_{3,D}|$.  The natural generalization is the following.

\begin{conjecture}[Optimal anticodes in $A_n$]
\label{conj:general-anticode}
For all integers $n,D\ge0$,
\begin{equation}
\label{eq:general-anticode-conj}
 A_n(D)=M_n(D) \coloneq |S_{n,D}|
 =\sum_{j=0}^n\binom nj\binom{P_D}{j}
       \binom{Q_D+n-j}{n-j}.
\end{equation}
\end{conjecture}

We next give an exact reformulation as a multiset intersection problem.  For integers $m\ge2$ and $K\ge0$, let
\begin{equation}
 \Omega_{m,K}=\left\{a=(a_1,\ldots,a_m)\in\Z_{\ge0}^m \colon \sum_{i=1}^m a_i=K\right\}.
\end{equation}
This is the space of $K$-multisets on an $m$-element ground set.  For $a,b\in\Omega_{m,K}$, define
\begin{equation}
 |a\meet b|=\sum_{i=1}^m\min(a_i,b_i),
\end{equation}
the cardinality of their multiset intersection.  Then
\begin{equation}\label{eq:multiset-metric}
 |a\meet b|
 =\frac12\sum_{i=1}^m(a_i+b_i-|a_i-b_i|)
 =K-\frac12\|a-b\|_1.
\end{equation}
Thus a family $\cF\subseteq\Omega_{m,K}$ is $(K-D)$-intersecting if and only if it has half-$\ell_1$ diameter at most $D$ in the affine copy of $A_{m-1}$.

\begin{proposition}[Fixed-defect multiset reformulation]\label{prop:multiset-equivalence}
For fixed $m\ge2$ and $D\ge0$, Conjecture~\ref{conj:general-anticode} in dimension $m-1$ is equivalent to the following statement: for all sufficiently large $K$, every $(K-D)$-intersecting family $\cF\subseteq\Omega_{m,K}$ satisfies
\begin{equation}
\label{eq:fixed-defect-multiset}
 |\cF|\le M_{m-1}(D).
\end{equation}
The bound is attained for all sufficiently large $K$.
\end{proposition}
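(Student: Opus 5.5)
The plan is to show that, once $K$ is large compared with $D$, a diameter-$D$ family in $\Omega_{m,K}$ is the same object as a diameter-$D$ subset of $A_{m-1}$ (equivalently of $(\Z^{m-1},\da)$ via Lemma~\ref{lem:isometry}), up to translation. The bridge is the identity \eqref{eq:multiset-metric}: a family $\cF\subseteq\Omega_{m,K}$ is $(K-D)$-intersecting exactly when its half-$\ell_1$ diameter is at most $D$. Also, $\Omega_{m,K}$ is the set of lattice points of the affine hyperplane $\{\sum a_i=K\}$ lying in the nonnegative orthant. Fixing a base point $c\in\Omega_{m,K}$, the map $a\mapsto a-c$ is an isometry from $\Omega_{m,K}$ onto $(A_{m-1}\cap(\Z^m_{\ge0}-c),\dAn)$.

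For the direction from the conjecture to the multiset bound, let $\cF$ be $(K-D)$-intersecting. Its image under $a\mapsto a-c$ is a subset of $A_{m-1}$ with $\dAn$-diameter at most $D$. By the conjecture, in the form $A_{m-1}(D)=M_{m-1}(D)$ transported through Lemma~\ref{lem:isometry}, we get $|\cF|\le M_{m-1}(D)$. This direction does not need $K$ to be large.

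For the converse, and for attainment, the key step is a finite-window argument. Let $\cF'\subseteq A_{m-1}$ have diameter at most $D$. Fix $z\in\cF'$. Every $w\in\cF'$ satisfies $\|w-z\|_1\le 2D$, so each coordinate satisfies $|w_i-z_i|\le D$. Now pick $K\ge mD$ and set $c=(K/m,\dots)$ when $m\mid K$. In general, take any $c\in\Omega_{m,K}$ with all $c_i\ge D$, which is possible once $K\ge mD$. Then the translate $\cF'-z+c$ lies in $\Z^m_{\ge0}$ and has coordinate sum $K$, so it is a $(K-D)$-intersecting subfamily of $\Omega_{m,K}$ of the same size. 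So if the multiset bound holds for some $K\ge mD$, then $|\cF'|\le M_{m-1}(D)$. Since the hypothesis gives the bound for all sufficiently large $K$, this proves the conjecture in dimension $m-1$ for this $D$.

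For attainment, apply the same embedding to $\Phi(S_{m-1,D})\subseteq A_{m-1}$. By Lemma~\ref{lem:SPQ-diameter-count} it has diameter at most $D$ and cardinality $M_{m-1}(D)$. For every $K\ge mD$ this gives an extremal $(K-D)$-intersecting family in $\Omega_{m,K}$.

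The only delicate point is bookkeeping rather than a real obstacle. The quantifier ``for all sufficiently large $K$'' must line up with the explicit threshold $K\ge mD$. One also has to check that the translation by $c-z$ preserves both the coordinate sum and nonnegativity. This works because $z\in A_{m-1}$ has coordinate sum zero, so $c-z+w$ still sums to $K$, and $|w_i-z_i|\le D\le c_i$ gives nonnegativity. I expect the coordinatewise bound $|w_i-z_i|\le D$ to be the step worth stating carefully, since it is what makes the window finite and independent of $K$.
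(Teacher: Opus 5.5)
Your proof is correct and follows essentially the paper's argument. For the forward direction, you identify $\Omega_{m,K}$ isometrically with a piece of an affine copy of $A_{m-1}$ via \eqref{eq:multiset-metric}; for the converse and for attainment, you translate a diameter-$D$ subset of $A_{m-1}$ into the nonnegative orthant. The only difference is bookkeeping: the paper translates by $(N,\ldots,N)$ with $N$ depending on the family, landing in layers $K=mN$, whereas your recentering at a point $z$ of the family gives the uniform explicit threshold $K\ge mD$. That recentering relies on $|w_i-z_i|\le D$, which holds because $w-z$ has zero coordinate sum, so its positive and negative masses each equal $\frac12\|w-z\|_1\le D$; this is worth saying explicitly, since $\|w-z\|_1\le 2D$ alone would not give it.
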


\begin{proof}
The layer $\Omega_{m,K}$ is contained in an affine translate of $A_{m-1}$, and \eqref{eq:multiset-metric} identifies its metric with half the ambient $\ell_1$ distance.  Hence Conjecture~\ref{conj:general-anticode} immediately gives \eqref{eq:fixed-defect-multiset}.

Conversely, let $\cF\subset A_{m-1}$ be finite of diameter at most $D$.  For $N$ sufficiently large, translation by $(N,\ldots,N)$ embeds $\cF$ into $\Omega_{m,mN}$ without changing distances.  Applying \eqref{eq:fixed-defect-multiset} gives the conjectured anticode bound.  Finally, translating the image of $S_{m-1,D}$ sufficiently far into the nonnegative orthant shows attainability for all sufficiently large $K$-layers.
\end{proof}

The natural complete-intersection candidates are also visible in this formulation.  Let $0\le s\le D$, put
\begin{equation}
 P_s=D-s,\qquad Q_s=s,
\end{equation}
and choose $T\in\Z_{\ge0}^m$ with
\begin{equation}
 |T|=K-D+2s.
\end{equation}
Define
\begin{equation}
\label{eq:multiset-C_s}
 \cC_s(T)=\left\{a\in\Omega_{m,K} \colon |a\meet T|\ge K-D+s\right\}.
\end{equation}
The coordinatewise inequality
\begin{equation}
 |a\meet b|\ge |a\meet T|+|b\meet T|-|T|
\end{equation}
shows that $\cC_s(T)$ is $(K-D)$-intersecting.

\begin{lemma}\label{lem:complete-intersection-size}
If
\begin{equation}
\label{eq:T-interior}
 Q_s\le T_i\le K-P_s
 \qquad(1\le i\le m),
\end{equation}
then
\begin{equation}
 |\cC_s(T)|=|S_{m-1}(P_s,Q_s)|.
\end{equation}
\end{lemma}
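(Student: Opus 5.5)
The plan is to identify $\cC_s(T)$, after translating by $T$, with a lattice copy of $S_{m-1}(P_s,Q_s)$ in the hyperplane model of Lemma~\ref{lem:isometry}. Write $P=P_s$, $Q=Q_s$, and for $a\in\Omega_{m,K}$ put $u=a-T\in\Z^m$. Then $\sum_i u_i=K-|T|=D-2s=P-Q$. Also, coordinatewise $\min(a_i,T_i)=T_i-(T_i-a_i)^+$, so
\[
 |a\meet T|=|T|-\sum_i(-u_i)^+=|T|-q(u).
\]
Hence the defining condition $|a\meet T|\ge K-D+s$ is equivalent to $q(u)\le |T|-(K-D+s)=s=Q$. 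Since $p(u)=q(u)+\sum_i u_i$, this automatically gives $p(u)\le P$.

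Next I check that the constraint $a\in\Z_{\ge0}^m$ is never binding. This is where hypothesis \eqref{eq:T-interior} enters. If $q(u)\le Q$, then $u_i\ge -Q$, so $a_i=T_i+u_i\ge T_i-Q\ge 0$. The total $\sum_i a_i=K$ holds by construction. The upper bound $T_i\le K-P$ is symmetric and would guarantee $a_i\le K$, which is in fact automatic. Therefore
\[
 \cC_s(T)-T=U\coloneq\Big\{u\in\Z^m \colon \textstyle\sum_i u_i=P-Q,\ q(u)\le Q\Big\},
\]
and it remains to show $|U|=|S_{m-1}(P,Q)|$.

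For this I use the decomposition $S_{m-1}(P,Q)=(P\Delta_{m-1}-Q\Delta_{m-1})\cap\Z^{m-1}$ in its integral form. Each $x\in S_{m-1}(P,Q)$ can be written as $x=\alpha'-\beta'$ with $\alpha'=x^+$ and $\beta'=x^-$, where $\alpha',\beta'\in\Z_{\ge0}^{m-1}$ satisfy $|\alpha'|\le P$ and $|\beta'|\le Q$. Append slack coordinates $\alpha_m=P-|\alpha'|$ and $\beta_m=Q-|\beta'|$. This shows that the injective map $x\mapsto \Phi(x)+(P-Q)e^{(m)}$, with $\Phi$ as in Lemma~\ref{lem:isometry} (it does not depend on the choice of decomposition), sends $S_{m-1}(P,Q)$ into
\[
 V\coloneq\{\alpha-\beta \colon \alpha,\beta\in\Z_{\ge0}^m,\ |\alpha|=P,\ |\beta|=Q\}.
\]
Conversely, every element of $V$ is the image of the point obtained by deleting its last coordinate, and that point lies in $S_{m-1}(P,Q)$ because its positive and negative masses are bounded by $|\alpha|$ and $|\beta|$. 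So the map is a bijection onto $V$.

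Finally I show $V=U$. If $u=\alpha-\beta\in V$, then $\sum_i u_i=P-Q$ and $q(u)\le|\beta|=Q$. If $u\in U$, set $c=Q-q(u)\ge0$, $\beta=u^-+c\,e^{(1)}$ and $\alpha=u^++c\,e^{(1)}$. Then $|\beta|=Q$ and $|\alpha|=p(u)+c=(P-Q+q(u))+Q-q(u)=P$.

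The only delicate step is the bookkeeping in the identification $S_{m-1}(P,Q)\leftrightarrow V$, namely the slack coordinates and the check that the map is well defined and bijective. Everything else is direct substitution.
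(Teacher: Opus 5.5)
Your proof is correct and takes essentially the same route as the paper. You translate by $T$, use the fixed total $P_s-Q_s$ together with the mass bound equivalent to $|a\meet T|\ge K-D+s$, and delete the last coordinate to get a bijection onto $S_{m-1}(P_s,Q_s)$, with $T_i\ge Q_s$ ensuring nonnegativity. The only difference is bookkeeping: you verify that the constraints match by passing through the intermediate set $V$ with slack coordinates, whereas the paper writes $u=\max\{p,q+P_s-Q_s\}$ and $v=\max\{q,p-P_s+Q_s\}$ directly. Your remark that the upper bound $T_i\le K-P_s$ is redundant is also correct.
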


\begin{proof}
For $a\in\cC_s(T)$ put $z=a-T$.  Since $|a|=K$ and $|T|=K-D+2s$,
\begin{equation}
 \sum_i z_i=P_s-Q_s.
\end{equation}
Moreover,
\begin{equation}
 u\coloneq\sum_i z_i^+=K-|a\meet T|\le P_s,
 \qquad
 v\coloneq\sum_i(-z_i)^+\le Q_s.
\end{equation}
Delete the final coordinate of $z$, and let $x\in\Z^{m-1}$ be the resulting vector.  Write $p=p(x)$, $q=q(x)$, and $r=\sum_{i<m}x_i=p-q$.  The missing coordinate is $z_m=P_s-Q_s-r$, and hence
\begin{equation}
 u=\max\{p,q+P_s-Q_s\},
 \qquad
 v=\max\{q,p-P_s+Q_s\}.
\end{equation}
It follows that $u\le P_s$ and $v\le Q_s$ are equivalent to
\begin{equation}
 p(x)\le P_s,\qquad q(x)\le Q_s.
\end{equation}
This gives an injection into $S_{m-1}(P_s,Q_s)$.  Conversely, every $x$ in that set reconstructs a vector $z$ with each coordinate in $[-Q_s,P_s]$ and with the required total sum; condition \eqref{eq:T-interior} guarantees $a=T+z\in\Omega_{m,K}$.  The correspondence is therefore bijective.
\end{proof}

For large $K$, interior choices of $T$ exist.  Lemma~\ref{lem:complete-intersection-size} shows that the balanced choice $s=Q_D$ gives the conjectured value $M_{m-1}(D)$.  Thus Conjecture~\ref{conj:general-anticode} is a fixed-ground-set, fixed-defect complete-intersection assertion for multisets.

For ordinary sets, the corresponding complete-intersection problem is solved by the theorem of Ahlswede--Khachatrian \cite{AhlswedeKhachatrian1997}.  The multiset Erd\H{o}s--Ko--Rado literature uses precisely the multiplicity-sensitive intersection in \eqref{eq:multiset-metric}.  Meagher and Purdy determined the ordinary intersecting case and its equality cases \cite{MeagherPurdy2011}.  F\"uredi, Gerbner, and Vizer used down-compression to determine the maximum cardinality of a $t$-intersecting family of $K$-multisets in the large-ground-set range $m\ge 2K-t$ \cite{FurediGerbnerVizer2015}; Meagher and Purdy subsequently determined the equality cases in that range and established related Hilton--Milner-type results \cite{MeagherPurdy2016}.  Here
\begin{equation}
 m\text{ is fixed},\qquad t=K-D,\qquad
 2K-t=K+D\longrightarrow\infty,
\end{equation}
so the available large-ground-set theorem does not apply.  Theorem~\ref{thm:main-anticode} settles this fixed-defect problem for $m=4$.

\section{Concluding remarks}
\label{sec:conclusion}

Theorem~\ref{thm:main-tiling} concerns lattice tilings.  It does not rule out arbitrary nonlattice translational tilings by $S_{3,D}$.  For every fixed dimension at least three, general tilings are known not to exist for all sufficiently large diameters by a compactness argument together with the necessary central symmetry of facets of translative tiling polytopes \cite{McMullen1980,KovacevicTan2018}.  A complete nonlattice classification in dimension three remains open, as does the problem of obtaining effective general-tiling thresholds in higher dimensions.

The central open problem suggested by this work is Conjecture~\ref{conj:general-anticode}.  The $K_4$ polarization used here is special to rank three: the six complementary pair-sum offsets form three opposite edge pairs, and local balancing leaves only four terminal types.  In higher rank, the analogous subset-sum parameters live on a much larger Boolean lattice.  The multiset formulation may offer a more natural setting for general compression or complete-intersection methods.

\appendix

\section{A direct layer-counting formula}\label{app:layer-counting}
\label{sec:appendix}

For completeness, we record an exact formula for the normalized slab count $N_D(a,b,c,h)$.  It gives an alternative, piecewise-polynomial description of the four-parameter problem and independently reproduces the terminal counts in Proposition~\ref{prop:terminal-counts}.

Fix the total sum
\begin{equation}
 s=x_1+x_2+x_3.
\end{equation}
The pair constraints in \eqref{eq:def-XD} become independent coordinate bounds
\begin{equation}
\begin{aligned}
 \ell_1(s)&=\max\{0,s-c-D\}, \;& u_1(s)&=\min\{D,s-c\},\\
 \ell_2(s)&=\max\{0,s-b-D\},& u_2(s)&=\min\{D,s-b\},\\
 \ell_3(s)&=\max\{0,s-a-D\},& u_3(s)&=\min\{D,s-a\}.
\end{aligned}
\end{equation}
Put
\begin{equation}
 m_i(s)=u_i(s)-\ell_i(s)+1,
 \qquad
 R(s)=s-\sum_{i=1}^3\ell_i(s).
\end{equation}
For an integer $r$, define
\begin{equation}
 \binom r2_+=
 \begin{cases}
  \binom r2,&r\ge2,\\
  0,&r<2.
 \end{cases}
\end{equation}
If $u_i(s)<\ell_i(s)$ for some $i$, set $T_s(a,b,c)=0$.  Otherwise, inclusion--exclusion for bounded compositions gives the number of points in the $s$-layer as
\begin{equation}\label{eq:layer-formula}
 T_s(a,b,c)=
 \sum_{J\subseteq[3]}(-1)^{|J|}
 \binom{R(s)-\sum_{j\in J}m_j(s)+2}{2}_+.
\end{equation}
Consequently,
\begin{equation}\label{eq:N-layer-sum}
 N_D(a,b,c,h)=
 \sum_{s=\max\{0,h\}}^{\min\{3D,h+D\}}
 T_s(a,b,c).
\end{equation}

Substitution of the four terminal label systems gives, respectively,
\begin{equation}
 F_D(t),\qquad F_D(t)-(D+1),\qquad G_D(t),\qquad G_D(t).
\end{equation}
Moreover, comparing \eqref{eq:layer-formula} before and after the local move $(\alpha,\gamma)\mapsto(\alpha+1,\gamma-1)$ reduces fiberwise to the interval-overlap inequality of Lemma~\ref{lem:interval-shift}.  Thus the direct enumeration and the polarization proof are two forms of the same local phenomenon.

\section*{Funding}
This work was supported by the Ministry of Science, Technological Development and Innovation of the Republic of Serbia [contract no. 451-03-34/2026-03/200156] and by the Faculty of Technical Sciences, University of Novi Sad, Serbia [project no. 01-3609/1]. The funders had no role in the design of the study, the analysis or interpretation of the results, the preparation of the manuscript, or the decision to submit it for publication.

\sloppy
\hfuzz=1pt
\bibliographystyle{ejc-num}
\bibliography{A3_optimal_anticodes}

@article{AhlswedeCaiZhang1992,
  author  = {Ahlswede, Rudolf and Cai, Ning and Zhang, Zhen},
  title   = {Diametric theorems in sequence spaces},
  journal = {Combinatorica},
  volume  = {12},
  number  = {1},
  pages   = {1--17},
  year    = {1992},
  doi     = {10.1007/BF01191200}
}

@article{AhlswedeKhachatrian1997,
  author  = {Ahlswede, Rudolf and Khachatrian, Levon H.},
  title   = {The complete intersection theorem for systems of finite sets},
  journal = {European Journal of Combinatorics},
  volume  = {18},
  number  = {2},
  pages   = {125--136},
  year    = {1997},
  doi     = {10.1006/eujc.1995.0092}
}

@article{AhlswedeAydinianKhachatrian2001,
  author  = {Ahlswede, Rudolf and Aydinian, Harout K. and Khachatrian, Levon H.},
  title   = {On perfect codes and related concepts},
  journal = {Designs, Codes and Cryptography},
  volume  = {22},
  number  = {3},
  pages   = {221--237},
  year    = {2001},
  doi     = {10.1023/A:1008394205999}
}

@article{BollobasLeader1993,
  author  = {Bollob{\'a}s, B{\'e}la and Leader, Imre},
  title   = {Maximal sets of given diameter in the grid and the torus},
  journal = {Discrete Mathematics},
  volume  = {122},
  number  = {1--3},
  pages   = {15--35},
  year    = {1993},
  doi     = {10.1016/0012-365X(93)90284-Z}
}

@article{DuKleitman1990,
  author  = {Du, Ding-Zhu and Kleitman, Daniel J.},
  title   = {Diameter and radius in the {M}anhattan metric},
  journal = {Discrete \& Computational Geometry},
  volume  = {5},
  pages   = {351--356},
  year    = {1990},
  doi     = {10.1007/BF02187795}
}

@article{FurediGerbnerVizer2015,
  author  = {F{\"u}redi, Zolt{\'a}n and Gerbner, D{\'a}niel and Vizer, M{\'a}t{\'e}},
  title   = {A discrete isodiametric result: the {E}rd{\H{o}}s--{K}o--{R}ado theorem for multisets},
  journal = {European Journal of Combinatorics},
  volume  = {48},
  pages   = {224--233},
  year    = {2015},
  doi     = {10.1016/j.ejc.2015.02.023}
}

@article{Hoylman1970,
  author  = {Hoylman, Donald J.},
  title   = {The densest lattice packing of tetrahedra},
  journal = {Bulletin of the American Mathematical Society},
  volume  = {76},
  pages   = {135--137},
  year    = {1970},
  doi     = {10.1090/S0002-9904-1970-12400-4}
}

@article{KleitmanFellows1989,
  author  = {Kleitman, Daniel J. and Fellows, Michael R.},
  title   = {Radius and diameter in {M}anhattan lattices},
  journal = {Discrete Mathematics},
  volume  = {73},
  number  = {1--2},
  pages   = {119--125},
  year    = {1989},
  doi     = {10.1016/0012-365X(88)90139-2}
}

@book{Klove1981,
  author  = {Kl{\o}ve, Torleiv},
  title   = {Error correcting codes for the asymmetric channel},
  publisher  = {Dept. Inform., Univ. Bergen, Bergen, Norway, Tech. Rep.},
  year    = {1981}
}

@article{Kovacevic2019,
  author  = {Kova{\v{c}}evi{\'c}, Mladen},
  title   = {Runlength-limited sequences and shift-correcting codes: asymptotic analysis},
  journal = {IEEE Transactions on Information Theory},
  volume  = {65},
  number  = {8},
  pages   = {4804--4814},
  year    = {2019},
  doi     = {10.1109/TIT.2019.2907979}
}

@article{Kovacevic2022,
  author  = {Kova{\v{c}}evi{\'c}, Mladen},
  title   = {Abelian difference sets as lattice coverings and lattice tilings},
  journal = {Bulletin of the Australian Mathematical Society},
  volume  = {106},
  number  = {2},
  pages   = {177--184},
  year    = {2022},
  doi     = {10.1017/S0004972721001271}
}

@article{KovacevicTan2017,
  author  = {Kova{\v{c}}evi{\'c}, Mladen and Tan, Vincent Y. F.},
  title   = {Improved bounds on {S}idon sets via lattice packings of simplices},
  journal = {SIAM Journal on Discrete Mathematics},
  volume  = {31},
  number  = {3},
  pages   = {2269--2278},
  year    = {2017},
  doi     = {10.1137/16M1099182}
}

@article{KovacevicTan2018,
  author  = {Kova{\v{c}}evi{\'c}, Mladen and Tan, Vincent Y. F.},
  title   = {Codes in the space of multisets---coding for permutation channels with impairments},
  journal = {IEEE Transactions on Information Theory},
  volume  = {64},
  number  = {7},
  pages   = {5156--5169},
  year    = {2018},
  doi     = {10.1109/TIT.2017.2789292}
}

@article{KovacevicVukobratovic2015,
  author  = {Kova{\v{c}}evi{\'c}, Mladen and Vukobratovi{\'c}, Dejan},
  title   = {Perfect codes in the discrete simplex},
  journal = {Designs, Codes and Cryptography},
  volume  = {75},
  number  = {1},
  pages   = {81--95},
  year    = {2015},
  doi     = {10.1007/s10623-013-9893-5}
}

@article{LIAO2024,
title = {{E}rdős-{K}o-{R}ado theorem for bounded multisets},
journal = {Journal of Combinatorial Theory, Series A},
volume = {206},
pages = {105888},
year = {2024},
issn = {0097-3165},
doi = {10.1016/j.jcta.2024.105888},
author = {Jiaqi Liao and Zequn Lv and Mengyu Cao and Mei Lu},
}

@article{McMullen1980,
  author  = {McMullen, Peter},
  title   = {Convex bodies which tile space by translation},
  journal = {Mathematika},
  volume  = {27},
  number  = {1},
  pages   = {113--121},
  year    = {1980},
  doi     = {10.1112/S0025579300010007}
}

@article{MeagherPurdy2011,
  author  = {Meagher, Karen and Purdy, Alison},
  title   = {An {E}rd{\H{o}}s--{K}o--{R}ado theorem for multisets},
  journal = {Electronic Journal of Combinatorics},
  volume  = {18},
  number  = {1},
  pages   = {P220},
  year    = {2011},
  doi     = {10.37236/707}
}

@article{MeagherPurdy2016,
  author  = {Meagher, Karen and Purdy, Alison},
  title   = {Intersection theorems for multisets},
  journal = {European Journal of Combinatorics},
  volume  = {52},
  pages   = {120--135},
  year    = {2016},
  doi     = {10.1016/j.ejc.2015.09.006}
}

@article{Singer1938,
  author  = {Singer, James},
  title   = {A theorem in finite projective geometry and some applications to number theory},
  journal = {Transactions of the American Mathematical Society},
  volume  = {43},
  number  = {3},
  pages   = {377--385},
  year    = {1938},
  doi     = {10.1090/S0002-9947-1938-1501951-4}
}

@article{Varshamov1973,
  author  = {Varshamov, Rom R.},
  title   = {A class of codes for asymmetric channels and a problem from the additive theory of numbers},
  journal = {IEEE Transactions on Information Theory},
  volume  = {19},
  number  = {1},
  pages   = {92--95},
  year    = {1973},
  doi     = {10.1109/TIT.1973.1054954}
}

\end{document}